\newtheorem{theorem}[equation]{Theorem}
\newtheorem{definition}[equation]{Defnition
}
\newtheorem{lemma}[equation]{Lemma}
\newtheorem{proposition}[equation]{Proposition}
\newtheorem{remark}[equation]{Remark}
\newenvironment{proof}[1][Proof]{\noindent\textbf{#1.} }{\

\rule{0.5em}{0.5em}\medskip}
\newcommand{\R}{\mathbb R}
\newcommand{\C}{\mathbb C}
\newcommand{\Z}{\mathbb Z}
\renewcommand{\k}{\mathfrak k}
\newcommand\inv{^{-1}}
\newcommand{\g}{\mathfrak g}
\newcommand{\sgn}{\text{sgn}}
\newcommand{\diag}{\text{diag}}
\renewcommand{\int}{\text{int}}
\renewcommand{\sec}[1]{\section{#1}
\renewcommand{\theequation}{\thesection.\arabic{equation}}
  \setcounter{equation}{0}}
\newcommand{\subsec}[1]{\subsection{#1}
\renewcommand{\theequation}{\thesection.\arabic{equation}}
\setcounter{equation}{0}}
\begin{document}
\title{Deforming representations of $SL(2,\R)$}
\author{Jeffrey Adams}
\maketitle{}

\sec{Introduction}

Consider the  spherical principal series representations $\pi(\nu)$ of $G=SL(2,\R)$.
Viewing these as Harish-Chandra modules, this is a family of
representations parametrized by $\nu\in\C$, 
realized on a fixed vector space $V$ with basis
$\{v_j\mid j\in2\Z\}$, with operators varying continuously in $\nu$.  If $\nu$
is not an odd integer then $\pi(\nu)$ is irreducible.  If $\nu=2n+1$
then $\pi(\nu)$ is indecomposable: the $n$-dimensional irreducible representation
is the unique irreducible submodule or quotient of
$\pi(\nu)$ (depending on the sign of $n$), and it has two infinite
dimensional quotients or submodules, accordingly.

It is interesting to consider whether it is possible to find a new family 
of representations, denoted $\Pi(\nu)$, varying continuously in $\nu$, so that
$\Pi(\nu)$ has the same composition factors as $\pi(\nu)$ for all
$\nu$  but 
such  that $\Pi(\nu)$ is {\it completely reducible} for all $\nu$. 

This turns out to be possible. Here is a precise statement, also incorporating 
the non-spherical principal series.  For $\epsilon=\pm1$ and
$\nu\in\C$ let $\chi=\chi(\epsilon,\nu)$ be the character of
$\R^\times$ given by $\chi(e^t)=e^{t\nu}$ and $\chi(-1)=\epsilon$.
Let $B$ be a Borel subgroup of $G$. Then the quotient of $B$ by its
nilradical is isomorphic to $\R^\times$.  Write $\chi(\epsilon,\nu)$ for
the resulting character of $B$. 

Let $\g$ be the complexified Lie algebra of $G$, and let $K\simeq S^1$ be a maximal compact subgroup of $G$. 
We define  $\pi(\epsilon,\nu)$ to be the underlying $(\g,K)$-module
of the principal series representation of $SL(2,\R)$, which is obtained by 
normalized induction from the character $\chi(\epsilon,\nu)$ of $B$.

It is well known (for example see \cite[Section 1.3]{vogan_green}) that
$\pi(\epsilon,\nu)$ is reducible if and only if $\nu\in\Z$ and
$(-1)^{\nu+1}=\epsilon$. Furthermore if $\pi(\epsilon,\nu)$ is
reducible, it is  completely reducible only if
$(\epsilon,\nu)=(-1,0)$. 
See Lemma \ref{l:reducibility}. 

\begin{theorem}
\label{t:main}
Fix $\epsilon=\pm1$. Then there is a family of $(\g,K)$-modules, denoted
$(\Pi(\epsilon,\nu),V_\epsilon)$, parametrized by $\nu\in\C$, realized on a fixed vector space $V_\epsilon$,
such that
\begin{enumerate}
\item For $X\in \g$ the operators $\Pi(\epsilon,\nu)(X)$ vary continuously in $\nu$;
\item For all $\nu$ $\Pi(\epsilon,\nu)$ has the same composition factors as $\pi(\epsilon,\nu)$;
\item For all $\nu$ $\Pi(\epsilon,\nu)$ is completely reducible.
\end{enumerate}
\end{theorem}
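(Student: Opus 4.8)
The plan is to build $\Pi(\epsilon,\nu)$ by conjugating the standard family $\pi(\epsilon,\nu)$ by a $\nu$-dependent diagonal operator $g(\nu)$ which is invertible away from the reducible parameters but degenerates, in a controlled way, at each of them, so that the limiting module splits. (Some such move is forced: for $\nu\notin\Z$ the only $(\g,K)$-module with the composition factors of the irreducible $\pi(\epsilon,\nu)$ is $\pi(\epsilon,\nu)$ itself, so any admissible family must agree with the standard one up to a continuous change of basis; the content is to choose that change of basis well.) Recall the standard model on $V_\epsilon=\bigoplus_j\C v_j$, with $j$ running over the even integers if $\epsilon=1$ and the odd integers if $\epsilon=-1$, $K$ acting by the weight $j$ on $v_j$, and, in a basis $W,E^+,E^-$ of $\g$ with $[W,E^\pm]=\pm2E^\pm$ and $[E^+,E^-]=W$, $\pi(\epsilon,\nu)(W)v_j=jv_j$ and $\pi(\epsilon,\nu)(E^\pm)v_j=p^\pm_j(\nu)v_{j\pm2}$ with each $p^\pm_j$ affine in $\nu$; the commutation relations together with the (common) infinitesimal character of the composition factors force $p^+_j(\nu)\,p^-_{j+2}(\nu)=\tfrac14(\nu^2-(j+1)^2)$ in a suitable normalization. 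For a reducible parameter $\nu_0$ (so $\nu_0\in\Z$ and $(-1)^{\nu_0+1}=\epsilon$, by Lemma~\ref{l:reducibility}) this product vanishes, to first order in $\nu$, exactly at the two ``critical'' links $j=\pm\nu_0-1$, and these are precisely the links at which $\pi(\epsilon,\nu_0)$ fails to be a direct sum, since on each of them one of $p^+_j(\nu_0),\,p^-_{j+2}(\nu_0)$ remains nonzero. Writing $\Sigma(\nu_0)$ for the span of the weight vectors in the submodule part of $\pi(\epsilon,\nu_0)$ and $Q(\nu_0)$ for the span of the quotient part --- one of these is the finite-dimensional constituent and the other is the direct sum $D^+\oplus D^-$ of the two infinite-dimensional constituents, according to the sign of $\nu_0$; and when $(\epsilon,\nu_0)=(-1,0)$ the module is already semisimple --- a short check shows that on each critical link the surviving coefficient is the one whose arrow runs from $Q(\nu_0)$ into $\Sigma(\nu_0)$.

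For the construction, fix for each reducible parameter $\nu_0$ (excluding $\nu_0=0$ when $\epsilon=-1$) a continuous $\phi_{\nu_0}\colon\C\to[0,\infty)$ equal to $|\nu-\nu_0|$ near $\nu_0$, strictly positive off $\nu_0$, and identically $1$ outside the disc of radius $\tfrac12$ about $\nu_0$; these discs are disjoint, since consecutive reducible parameters differ by $2$. Put $g_j(\nu)=\phi_{\nu_0}(\nu)^{\,s(\nu_0,j)}$ for $|\nu-\nu_0|<\tfrac12$ and $g_j(\nu)=1$ otherwise, where $s(\nu_0,j)=\tfrac12$ if $v_j\in\Sigma(\nu_0)$ and $s(\nu_0,j)=0$ otherwise; this is well defined because each $\nu$ lies in at most one of the discs, and $g(\nu):=\diag(g_j(\nu))$ is a $K$-equivariant operator on $V_\epsilon$, invertible exactly when $\nu$ is not a reducible parameter. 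Set $\Pi(\epsilon,\nu)(X)=g(\nu)\,\pi(\epsilon,\nu)(X)\,g(\nu)^{-1}$ for $\nu$ not a reducible parameter, and $\Pi(\epsilon,\nu_0)(X)=\lim_{\nu\to\nu_0}\Pi(\epsilon,\nu)(X)$ at a reducible parameter; once the limit is shown to exist, it is automatically a representation of $\g$ compatible with the (unchanged) $K$-action, since both conditions pass to limits.

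Then (1) will follow from the formula $\Pi(\epsilon,\nu)(E^\pm)v_j=(g_{j\pm2}(\nu)/g_j(\nu))\,p^\pm_j(\nu)\,v_{j\pm2}$, which is continuous off the reducible parameters, together with a local analysis at each $\nu_0$: there only $\phi_{\nu_0}$ is non-constant, so on a link internal to $\Sigma(\nu_0)$ or to $Q(\nu_0)$ the ratio $g_{j\pm2}/g_j$ is $\equiv1$ and the coefficient is unchanged, while on a critical link the surviving coefficient acquires a factor $\phi_{\nu_0}^{1/2}\to0$ and the already-vanishing coefficient acquires $\phi_{\nu_0}^{-1/2}$ times a simple zero of $p^\pm$, again a quantity of size $|\nu-\nu_0|^{1/2}\to0$; so both coefficients on each critical link extend continuously to $\nu_0$ with value $0$. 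For (2): off the reducible locus $\Pi(\epsilon,\nu)\cong\pi(\epsilon,\nu)$ by conjugation; and at $\nu_0$ both coefficients of each critical link vanish, so $\Sigma(\nu_0)$ and $Q(\nu_0)$ are $\g$-stable, complementary, and carry internally the same operators as the constituents of $\pi(\epsilon,\nu_0)$ (the one of $\Sigma(\nu_0),Q(\nu_0)$ that is $D^+\oplus D^-$ splits further, as $D^+$ and $D^-$ occupy non-adjacent weights), whence $\Pi(\epsilon,\nu_0)$ has the composition factors of $\pi(\epsilon,\nu_0)$. Condition (3) is then immediate: $\Pi(\epsilon,\nu)\cong\pi(\epsilon,\nu)$ is irreducible for $\nu$ not a reducible parameter, and $\Pi(\epsilon,\nu_0)$ is visibly a direct sum of its irreducible constituents.

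The step I expect to be the main obstacle is the balancing in (1): one must suppress the surviving direction of each critical link \emph{and} keep the conjugated operator continuous in the opposite, already-vanishing direction, and this is exactly what confines the exponent on $\Sigma(\nu_0)$ to the open interval $(0,1)$ --- the symmetric value $\tfrac12$ is convenient, as it makes the two coefficients of a critical link vanish at equal rates, but any value in $(0,1)$ works. The remaining points --- that $g(\nu)$ really is a well-defined invertible $K$-equivariant operator off the reducible locus, that the entrywise limits defining $\Pi(\epsilon,\nu_0)$ exist, and the degenerate case $(\epsilon,\nu_0)=(-1,0)$ where $\pi$ is already completely reducible and $g$ may be taken $\equiv1$ near $0$ --- are routine.
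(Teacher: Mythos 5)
Your construction is correct, and its engine is the same one the paper uses in Section \ref{s:deforming}: conjugate $\pi(\epsilon,\nu)$ by a $K$-equivariant diagonal family $S(\nu)$ that degenerates at the reducibility points at rate $|\nu-\nu_0|^{1/2}$, so that on each critical link the simple zero of the product $\tfrac14(\nu^2-m^2)$ is split between the $E$- and $F$-coefficients and both vanish in the limit; all of your individual claims (the surviving arrow runs from the quotient part into the submodule part, both rescaled coefficients are $O(|\nu-\nu_0|^{1/2})$, the limit is automatically a $(\g,K)$-module, the infinite-dimensional part splits further since $D_{\pm n}$ occupy non-adjacent weights) check out. The difference is in the choice of conjugating family: you localize, using bump functions supported in disjoint discs about the reducibility points and rescaling only the weight vectors of the submodule part, so your $\Pi(\epsilon,\nu)$ coincides with $\pi(\epsilon,\nu)$ outside those discs and depends on the chosen exponent in $(0,1)$ and cutoffs; the paper instead rescales every weight globally via $\psi_m(\nu)=|\nu+m|^{1/2}/|\nu-m|^{1/2}$, which produces the single closed formula $f_m(\nu)=|\nu^2-m^2|^{1/2}\tfrac{\nu+m}{|\nu+m|}$ of Proposition \ref{p:main}, verified afterwards by a direct check of the bracket relations and of the Casimir/weight data. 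What your variant buys is flexibility and a family untouched away from the reducible locus; what the paper's buys is an explicit, uniform formula (symmetric in $E$ and $F$, reducing to \eqref{e:formulas} for $\nu\in i\R$) that feeds directly into the Hermitian-form analysis of Section \ref{s:hermitian}. One small caveat: your opening parenthetical, that any admissible family must agree with the standard one up to a continuous change of basis, is neither proved nor needed, so it should be presented as motivation only.
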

Explicit formulas for $\Pi(\epsilon,\nu)$ are given in Proposition \ref{p:main}.

Although this result might seem surprising, in fact it goes back to
Naimark's 1964 book on the representation theory of the Lorentz group
\cite{naimark}.  Naimark constructs the irreducible
representations of $SL(2,\C)$ by the reverse process. First he
algebraically constructs a family of representations of the Lie
algebra, which are completely reducible for all values of the
parameters. Then, by modifying the operators appropriately, and taking
care with the poles and zeros, he deforms this to give the usual 
families of principal series representations, which, when reducible, are 
usually indecomposable.

This paper arose from an attempt elucidate the construction of
\cite{naimark}. It would be interesting to see to what extent 
this generalizes to other groups. 

A notable feature of the families of operators $\Pi(\epsilon,\nu)$ is
that they are not differentiable in $\nu$ (see Proposition
\ref{p:main}). Analytic families of $(\g,K)$-modules are studied in
\cite{noort}, and  algebraic families of both representations and algebras are considered in 
\cite{bernstein_higson_subag}.
As pointed out by Eyal Subag, a family satisfying the conditions of the Theorem cannot
be algebraic, and hence does not appear in loc. cit.  See Remark
\ref{r:not}.

There is a close relationship with invariant Hermitian forms.  If
$\pi(\epsilon,\nu)$ is indecomposable then an invariant Hermitian form
on it cannot be nondegenerate, or the representation would be completely
reducible. However the family, $\Pi(\epsilon,\nu)$ being completely
reducible for all $\nu$ can, and in fact does, support a continuous
family of nondegenerate invariant Hermitian forms with nice properties. See Section
\ref{s:hermitian}.

\sec{A finite dimensional example}

We begin with a simple example which illustrates the main ideas.
Consider the group $G=\langle \R^+,\delta\rangle$ with relations $\delta^2=1$ and $\delta x\delta\inv=x\inv$. 
(This is a subgroup of index $2$ of the indefinite orthogonal group $O(1,1)$). 
For $\nu\in\C$ let $\pi(\nu)$ be the representation of $G$ induced from the character $x\rightarrow x^\nu$ of $\R^+$. 
Then $\pi(\nu)$ is two-dimensional; it is  irreducible if $\nu\ne 0$, whereas $\pi(0)$ is the 
direct sum of the two characters of $G/\R^+$. 
With a natural choice of coordinates we can write the operators as
$$
\begin{aligned}
\pi(\nu)(x)&=
\begin{pmatrix}
  x^\nu&0\\0&x^{-\nu}
\end{pmatrix}\\
\pi(\nu)(\delta)&=
\begin{pmatrix}
  0&1\\1&0
\end{pmatrix}.
\end{aligned}
$$
Clearly $\lim_{\nu\rightarrow 0}(\pi(\nu))=\pi(0)$, and this
representation is completely reducible.

The action of $\R^+$ can be written
$$
\pi(\nu)(e^t)=\exp(t X(\nu))
$$
where $X(\nu)=\diag(\nu,-\nu)$. 
Set
$Y(\nu)=
\begin{pmatrix}
  \nu&1\\0&-\nu
\end{pmatrix}$;
the key point is that $Y(\nu)$ is conjugate to $X(\nu)$ if (and only
if) $\nu\ne 0$.  Let $S(\nu)=
\begin{pmatrix}
  1&1\\0&-2\nu
\end{pmatrix}$, so that
$$
S(\nu)X(\nu)S(\nu)\inv=Y(\nu)\quad (\nu\ne 0).
$$
For $\nu\ne 0$ conjugate  $\pi(\nu)$ by $S(\nu)$  to define:
$$
\begin{aligned}
\Pi(\nu)(e^t)&=
S(\nu)\Pi(\nu)(e^t)S(\nu)\inv\\
&=\exp(t
\begin{pmatrix}
  \nu&1\\0&-\nu
\end{pmatrix})
=
\begin{pmatrix}
  e^{t\nu}&\frac1{2\nu}(e^{t\nu}-e^{-t\nu})\\0&e^{-t\nu}  
\end{pmatrix}
\\
\Pi(\nu)(\delta)&=
S(\nu)\Pi(\nu)(\delta)S(\nu)\inv=\begin{pmatrix}
  1&0\\-2\nu&-1
\end{pmatrix}
\end{aligned}
$$
By construction $\Pi(\nu)\simeq \pi(\nu)$ if $\nu\ne 0$.
However when we take the limit we see
$$
\begin{aligned}
\lim_{\nu\rightarrow 0}\Pi(\nu)(e^t)=
\begin{pmatrix}
1&t\\0&1
\end{pmatrix}\\
\lim_{\nu\rightarrow 0}\Pi(\nu)(\delta)=
\begin{pmatrix}
  1&0\\0&-1
\end{pmatrix}
\end{aligned}
$$
This has the same composition factors as $\pi(0)$, but the restriction to $\R^+$ is indecomposable.

\sec{Principal Series Representations of $SL(2,\R)$}

We explicitly construct the $(\g,K)$-modules of the principal series of $SL(2,\R)$. 
This is well known. We follow \cite{vogan_green}.

Let $G=SL(2,\R)$, $\g_0=\mathfrak s\mathfrak l(2,\R)$, $\g=\mathfrak s\mathfrak
l(2,\C)$, $K=SO(2)$, the usual maximal compact subgroup of
$SL(2,R)$.
Choose a basis of $\g$:
$$
E
=\frac12
\begin{pmatrix}
  1&i\\i&-1
\end{pmatrix},\quad
F=\frac12
\begin{pmatrix}
1&-i\\-i&-1
\end{pmatrix},\quad
H=
\begin{pmatrix}
0&-i\\i&0  
\end{pmatrix}
$$
These satisfy
\begin{equation}
\label{e:defining}
[H,E]=2E,\, [H,F]=-2F\,, [E,F]=H
\end{equation}
Then  $\g=\mathfrak k\oplus \mathfrak p$, with $\mathfrak k=\C\langle H\rangle$ (the complexified Lie algebra of $K$) 
and $\mathfrak p=\C\langle E,F\rangle$.

Given $\epsilon=\pm1$, let
\begin{equation}
\Z_\epsilon=\{j\in\Z\mid (-1)^j=\epsilon\}
\end{equation}
and let $V_\epsilon$ be the complex vector space with basis $\{v_j\mid j\in\Z_\epsilon\}$. 

For  $\epsilon=\pm1, \nu\in\C$ define a $(\g,K)$-module $(\pi(\epsilon,\nu),V_\epsilon)$ as follows.
Since $K$ is connected it is enough to define the representation of $\g$.
For $j\in\Z_\epsilon$ define
\begin{equation}
\label{e:action}
\begin{aligned}
\pi(\epsilon,\nu)(H)v_j&=jv_j\\
\pi(\epsilon,\nu)(E)v_j&=\tfrac12(\nu+(j+1))v_{j+2}\\
\pi(\epsilon,\nu)(F)v_j&=\tfrac12(\nu-(j-1))v_{j-2}
\end{aligned}
\end{equation}
As usual we refer to $j\in \Z_{-\epsilon}$ as a weight of $K$, and $\langle v_j\rangle$ as the $j$-weight space. 

It is straightforward to check these operators satisfy \eqref{e:defining}, 
and this defines a $(\g,K)$-module. 
The Casimir
element 
\begin{equation}
\label{e:casimir}
\Omega=H^2+2(EF+FE)=H^2-2H+4EF
\end{equation}
is in  the center of the universal enveloping algebra, 
and it acts on $\pi(\epsilon,\nu)$ by the scalar $\nu^2-1$.
The Harish-Chandra homomorphism identifies the infinitesimal character of this representation 
with $\nu\in\k^*$.

Note that
\begin{subequations}
\label{e:red}
\renewcommand{\theequation}{\theparentequation)(\alph{equation}}  
\begin{equation}
\label{e:zero}
\begin{aligned}
\pi(\epsilon,\nu)(E)v_j&=0\Leftrightarrow j=-\nu-1\\
\pi(\epsilon,\nu)(F)v_j&=0\Leftrightarrow j=\nu+1.
\end{aligned}
\end{equation}
Since $j\in\Z_\epsilon$ we see 
the operators $\pi(\epsilon,\nu)(E)$ and $\pi(\epsilon,\nu)(F)$ are 
never $0$ on $V_\epsilon$ if and only if $\nu\not\in \Z_{-\epsilon}$. 
On the other hand if 
$\nu\in\Z_{-\epsilon}$ then 
\begin{equation}
\pi(\epsilon,\nu)(E)v_{-(\nu+1)}=\pi(\epsilon,\nu)(F)v_{\nu+1}=0. 
\end{equation}
Finally by (a) 
\begin{equation}
\pi(\epsilon,\nu)(E)v_j=0=\pi(\epsilon,\nu)(F)v_{j+2}\Leftrightarrow \epsilon=-1,\nu=0,j=-1
\end{equation}
\end{subequations}

Let $F_n$ be the irreducible representation of $\g$ of dimension $n$.
For $n\ge 1$ let $D_n$ be the unique irreducible representation of
$\g$ with lowest weight $n+1$: the weights of $D_n$ are $\{n+2k\mid k\in \Z_{\ge 0}\}$.
This is a holomorphic discrete series
representation.  For $n\le 1$ let $D_n$ be the anti-holomorphic
discrete series representation with highest weight $n-1$.  Finally let
$L^{\pm}$ be the limits of discrete series: $L^+$ has lowest weight
$1$, and $L^-$ has highest weight $-1$.

Formulas \eqref{e:red}(a-c) prove:

\begin{lemma}\hfil\newline
\label{l:reducibility}
\noindent (1) $\pi(\epsilon,\nu)$ is reducible if and only if $\nu\in \Z_{-\epsilon}$.
\medskip

\noindent (2) Suppose $\pi(\epsilon,\nu)$ is reducible. Then it is completely reducible if and only if 
$(\epsilon,\nu)=(-1,0)$, in which case $\pi(-1,0)\simeq L^+\oplus L^-$.
\medskip

\noindent (3) Suppose $n\in\Z_{-\epsilon}$, $n\ne 0$. Then $\pi(\epsilon,n)$ is indecomposable. 
If $n>0$ then  $F_n$ is the unique irreducible quotient of $\pi(\epsilon,n)$, and there is an exact sequence
\begin{subequations}
\renewcommand{\theequation}{\theparentequation)(\alph{equation}}  
\label{e:exact}
\begin{equation}
0\rightarrow D_n\oplus D_{-n}\rightarrow \pi(\epsilon,n)\rightarrow F_n\rightarrow 0
\end{equation}
If $n<0$ then  $F_n$ is the unique irreducible submodule of $\pi(\epsilon,n)$, and there is an exact sequence
\begin{equation}
0\rightarrow F_n \rightarrow \pi(\epsilon,n)\rightarrow D_n\oplus D_{-n}\rightarrow 0
\end{equation}
\end{subequations}
\end{lemma}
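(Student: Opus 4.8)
The plan is to deduce all three assertions directly from the explicit action \eqref{e:action} and the vanishing relations collected in \eqref{e:red}. The one structural remark needed first is that $\pi(\epsilon,\nu)(H)$ is diagonal on $V_\epsilon$ with the pairwise distinct eigenvalues $\{j\mid j\in\Z_\epsilon\}$; hence every $(\g,K)$-submodule is a sum of weight spaces, $W=\bigoplus_{j\in S}\C v_j$ for some $S\subseteq\Z_\epsilon$, and by \eqref{e:action} such an $S$ is a submodule exactly when it is closed under the two ``arrows'' $j\mapsto j+2$ (present unless $\nu+j+1=0$) and $j\mapsto j-2$ (present unless $\nu-j+1=0$). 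Which arrows are missing is precisely the content of \eqref{e:zero}, so the whole lemma becomes bookkeeping on the arithmetic progression $\Z_\epsilon$ with its at most two missing arrows, located at the weights $-\nu-1$ and $\nu+1$, which lie in $\Z_\epsilon$ if and only if $\nu\in\Z_{-\epsilon}$.

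For (1): if $\nu\notin\Z_{-\epsilon}$, all arrows are present, so from any $v_j$ one reaches every $v_k$, and any nonzero submodule is all of $V_\epsilon$; thus $\pi(\epsilon,\nu)$ is irreducible. If $\nu=n\in\Z_{-\epsilon}$, then $\pi(\epsilon,n)(E)v_{-n-1}=0$ and $\pi(\epsilon,n)(F)v_{n+1}=0$ by \eqref{e:red}, and one reads off from \eqref{e:zero} that $\bigoplus_{j\ge n+1}\C v_j$ (if $n\ge 0$) or $\bigoplus_{j\le -n-1}\C v_j$ (if $n\le 0$) is a proper nonzero submodule; so $\pi(\epsilon,n)$ is reducible.

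For (2) and (3) assume $\nu=n\in\Z_{-\epsilon}$. If $n=0$ then $\epsilon=-1$, and the last relation in \eqref{e:red} gives $\pi(-1,0)(E)v_{-1}=0=\pi(-1,0)(F)v_{1}$, whence $V_{-1}=\bigl(\bigoplus_{j\le -1}\C v_j\bigr)\oplus\bigl(\bigoplus_{j\ge 1}\C v_j\bigr)$ is a direct sum of submodules of highest weight $-1$ and lowest weight $1$, i.e. $\pi(-1,0)\simeq L^-\oplus L^+$, which is completely reducible. Now take $n>0$; the case $n<0$ is the mirror image under interchanging $E$ with $F$ and highest with lowest weights. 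From \eqref{e:zero}, $\bigoplus_{j\ge n+1}\C v_j$ and $\bigoplus_{j\le -n-1}\C v_j$ are submodules, of lowest weight $n+1$ and highest weight $-n-1$, hence isomorphic to $D_n$ and $D_{-n}$; their weights are disjoint, so $M:=D_n\oplus D_{-n}$ is a submodule and $\pi(\epsilon,n)/M$ has basis the images of $v_{-n+1},\dots,v_{n-1}$. In the quotient $E$ vanishes only on the image of $v_{n-1}$ and $F$ only on that of $v_{-n+1}$, so the quotient is connected under the arrows, hence an $n$-dimensional irreducible module, necessarily $F_n$; this is the first exact sequence of \eqref{e:exact}. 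Finally $M$ is the unique maximal submodule: any submodule $\bigoplus_{j\in S}\C v_j$ containing a $v_j$ with $-n+1\le j\le n-1$ must, applying the $E$'s (nonzero on these weights, and $\pi(\epsilon,n)(E)v_{n-1}=nv_{n+1}\neq 0$), contain every $v_k$ with $k\ge n+1$, and likewise every $v_k$ with $k\le -n-1$, so $S=\Z_\epsilon$; hence $F_n$ is the unique irreducible quotient, and the statement for $n<0$ follows by the mirror argument, with $F_n$ now the unique irreducible submodule.

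For indecomposability when $n\ne 0$, suppose $\pi(\epsilon,n)=A\oplus B$ with $A,B$ nonzero. Then $\Z_\epsilon$ splits as a disjoint union $S_A\sqcup S_B$ of two nonempty sets, so some consecutive pair has $v_m\in A$ and $v_{m+2}\in B$; but by the last relation in \eqref{e:red} at least one of $\pi(\epsilon,n)(E)v_m$, $\pi(\epsilon,n)(F)v_{m+2}$ is nonzero, since simultaneous vanishing would force $-n-1=m=n-1$, i.e. $n=0$, and either one forces $v_m$ and $v_{m+2}$ into the same summand, a contradiction. Together with the $n=0$ case this yields (2). I expect no serious obstacle; the only thing to watch is tracking the two exceptional weights $-n-1,\,n+1$ for each $n$ and the fact that they coincide exactly when $n=0$, which is precisely what singles out $(\epsilon,\nu)=(-1,0)$.
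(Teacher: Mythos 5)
Your proof is correct and follows exactly the route the paper intends: the paper simply asserts that the lemma follows from the vanishing formulas \eqref{e:red}(a--c), and your argument is the careful bookkeeping of that claim (submodules are sums of weight spaces, then track which raising/lowering arrows vanish). Nothing essentially different, and no gaps.
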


It is convenient to change variables and write the action \eqref{e:action} as follows.
\begin{equation}
\label{e:formulas}
\begin{aligned}
\pi(\epsilon,\nu)(H)v_m&=mv_m&(m\in\Z_\epsilon)\\
\pi(\epsilon,\nu)(E)v_{m-1}&=\tfrac12(\nu+m)v_{m+1}&(m\in\Z_{-\epsilon})\\
\pi(\epsilon,\nu)(F)v_{m+1}&=\tfrac12(\nu-m)v_{m-1}&(m\in\Z_{-\epsilon})
\end{aligned}
\end{equation}

We now modify the operators to give completely reducible
representations for all $\nu$.

\begin{definition}
For $m\in\Z,\nu\in\C$ define
\begin{equation}
f_m(\nu)=
\begin{cases}
|\nu^2-m^2|^{\frac12}\frac{\nu+m}{|\nu+m|}&\nu\ne -m\\
0&\nu=-m
\end{cases}
\end{equation}
\end{definition}

\begin{proposition}
\label{p:main}
For $\epsilon=\pm1$ define a $(\g,K)$-module $(\Pi(\epsilon,\nu),V_\epsilon)$, 
with operators
$$
\begin{aligned}
\Pi(\epsilon,\nu)(H)v_m&=mv_m&(m\in \Z_\epsilon)\\
\Pi(\epsilon,\nu)(E)v_{m-1}&=f_m(\nu)v_{m+1}&(m\in \Z_{-\epsilon})\\
\Pi(\epsilon,\nu)(F)v_{m+1}&=f_{-m}(\nu)v_{m-1}&(m\in \Z_{-\epsilon})\\
\end{aligned}
$$
Then
\begin{enumerate}
\item $\Pi(\epsilon,\nu)$ is a $(\g,K)$ module;
\item $\Pi(\epsilon,\nu)\simeq \pi(\epsilon,\nu)$ for all $\nu\not\in\Z_{-\epsilon}$;
\item $\Pi(\epsilon,\nu)$ and $\pi(\epsilon,\nu)$ have the same composition factors for all $\nu$;
\item $\Pi(\epsilon,\nu)$ is completely reducible for all $\epsilon,\nu$. 
\end{enumerate}
\end{proposition}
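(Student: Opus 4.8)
The plan is to verify the four assertions in turn, treating (3) and (4) together.

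\emph{Assertion (1).} Since $K=SO(2)$ is connected and $\Pi(\epsilon,\nu)(H)$ acts on $v_m$ by the integer $m$, it suffices to check that the three operators satisfy \eqref{e:defining}. As $H$ is diagonal while $E$ and $F$ shift weights by $\pm 2$, the relations $[H,E]=2E$ and $[H,F]=-2F$ are immediate. Writing $E,F$ for $\Pi(\epsilon,\nu)(E),\Pi(\epsilon,\nu)(F)$, one computes for $m\in\Z_{-\epsilon}$
$$
EF\,v_{m+1}=f_m(\nu)f_{-m}(\nu)\,v_{m+1},\qquad
FE\,v_{m+1}=f_{m+2}(\nu)f_{-(m+2)}(\nu)\,v_{m+1},
$$
so that $[E,F]=H$ reduces to the identity $f_m(\nu)f_{-m}(\nu)-f_{m+2}(\nu)f_{-(m+2)}(\nu)=m+1$. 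This follows at once from the single clean fact that $f_m(\nu)f_{-m}(\nu)=\tfrac14(\nu^2-m^2)$ for every $\nu\in\C$: away from $\nu=\pm m$ the unit factors $\tfrac{\nu+m}{|\nu+m|}$ and $\tfrac{\nu-m}{|\nu-m|}$ multiply to $\tfrac{\nu^2-m^2}{|\nu^2-m^2|}$, which cancels the modulus $|\nu^2-m^2|$, while at $\nu=\pm m$ both sides vanish by the definition of $f_m$.

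\emph{Assertion (2).} Fix $\nu\notin\Z_{-\epsilon}$. I would produce a diagonal intertwiner $S(\nu)v_m=c_m(\nu)v_m$ with $S(\nu)\,\pi(\epsilon,\nu)(X)\,S(\nu)^{-1}=\Pi(\epsilon,\nu)(X)$ for all $X\in\g$. Comparing with \eqref{e:formulas}, the case $X=H$ is automatic, the case $X=E$ demands $\tfrac{c_{m+1}}{c_{m-1}}\cdot\tfrac12(\nu+m)=f_m(\nu)$, and the case $X=F$ demands $\tfrac{c_{m-1}}{c_{m+1}}\cdot\tfrac12(\nu-m)=f_{-m}(\nu)$; these two requirements are mutually consistent precisely because $f_m(\nu)f_{-m}(\nu)=\tfrac14(\nu^2-m^2)$. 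Since $\nu\notin\Z_{-\epsilon}$, for every $m\in\Z_{-\epsilon}$ both $m$ and $-m$ lie in $\Z_{-\epsilon}$, so $\nu\neq -m$ and $\nu\neq\pm m$, whence $\nu+m\neq 0$ and $f_m(\nu)\neq 0$; thus $c_{m+1}/c_{m-1}$ is a well-defined nonzero scalar. As $\Z_\epsilon$ is a single chain under the shifts $\pm 2$, one normalizes some $c_{m_0}=1$ and solves the recursion, obtaining an invertible $S(\nu)$, so $\Pi(\epsilon,\nu)\simeq\pi(\epsilon,\nu)$.

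\emph{Assertions (3) and (4).} For $\nu\notin\Z_{-\epsilon}$ these follow immediately from (2) and Lemma \ref{l:reducibility}(1): $\Pi(\epsilon,\nu)\simeq\pi(\epsilon,\nu)$ is irreducible, hence completely reducible, with the same single composition factor. Suppose now $\nu=n\in\Z_{-\epsilon}$. The decisive point is that $f_m(\nu)=0$ if and only if $\nu\in\{m,-m\}$, so that $\Pi(\epsilon,n)(E)$ annihilates exactly $v_{n-1}$ and $v_{-n-1}$, $\Pi(\epsilon,n)(F)$ annihilates exactly $v_{n+1}$ and $v_{-n+1}$, and every other raising or lowering map between consecutive weight vectors is nonzero. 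These vanishings partition $\Z_\epsilon$ into the three $\g$-stable pieces
$$
\{\,m\le -|n|-1\,\},\qquad \{\,-|n|+1\le m\le |n|-1\,\},\qquad \{\,m\ge |n|+1\,\}
$$
(the middle one empty when $n=0$), and each piece, having all its interior maps nonzero, is irreducible: the middle one is the $|n|$-dimensional $F_n$; the positive tail has lowest weight $|n|+1$ and the negative tail has highest weight $-|n|-1$, so together they are the two discrete series $D_n$ and $D_{-n}$ (replaced by the limits $L^+$ and $L^-$ when $n=0$). Hence $\Pi(\epsilon,n)$ is the direct sum of these, so completely reducible, and its composition factors are $\{F_n,D_n,D_{-n}\}$ for $n\ne 0$ and $\{L^+,L^-\}$ for $n=0$, which by Lemma \ref{l:reducibility}(2),(3) are exactly those of $\pi(\epsilon,n)$.

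\emph{Where the work lies.} There is essentially one idea to get right, used twice: the matched‑sign form of $f_m$ is what makes the product $f_m(\nu)f_{-m}(\nu)$ a genuine polynomial in $\nu$ with no residual sign, and both the bracket relation in (1) and the consistency of the intertwiner in (2) rest on this. Moreover $f_m$ is engineered to acquire an \emph{extra} zero, at $\nu=m$, beyond the zero of the original structure constant $\tfrac12(\nu+m)$ at $\nu=-m$; it is precisely this extra zero that, at each integral parameter, disconnects the middle block from the two tails and so converts the indecomposable $\pi(\epsilon,n)$ into the completely reducible $\Pi(\epsilon,n)$. Carrying this out is routine; the only care needed is the bookkeeping of the degenerate values $\nu=\pm m$ and the identification of the three blocks with $F_n,D_n,D_{-n}$ (or $L^\pm$).
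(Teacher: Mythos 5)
Your argument is correct, and its engine --- the product identity for $f_m(\nu)f_{-m}(\nu)$, the extra zero of $f_m$ at $\nu=m$, and the symmetric vanishing of $E$ and $F$ --- is exactly the paper's. The differences are in parts (2) and (3): for (2) the paper does not build an intertwiner inside the proof, but notes that $\pi(\epsilon,\nu)$ and $\Pi(\epsilon,\nu)$ have the same weights and the same Casimir scalar $\nu^2-1$ and invokes \cite[Corollary 1.2.8]{vogan_green} to conclude they are isomorphic when irreducible; your explicit diagonal $S(\nu)$, obtained from the recursion $c_{m+1}/c_{m-1}=2f_m(\nu)/(\nu+m)$, is self-contained and is in effect the change of basis the paper performs later in Section~\ref{s:deforming} via the functions $\phi_j,\psi_m$. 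For (3) the paper argues in one line from the implication $\pi(\epsilon,\nu)(E)v_j=0\Rightarrow\Pi(\epsilon,\nu)(E)v_j=0$ (and similarly for $F$), whereas your explicit three-block decomposition at $\nu=n$, identifying the summands with $F_{|n|}$, $D_{\pm n}$ (or $L^{\pm}$) and comparing with Lemma \ref{l:reducibility}, is more detailed and yields (4) simultaneously; the paper gets (4) from the equivalence $\Pi(E)v_{m-1}=0\Leftrightarrow\Pi(F)v_{m+1}=0$, which is the same observation. One caveat on normalization: with the Definition of $f_m$ as printed, the cancellation you describe gives $f_m(\nu)f_{-m}(\nu)=\nu^2-m^2$, not $\tfrac14(\nu^2-m^2)$, so your stated identity is internally off by a factor of $4$; the identity you actually use corresponds to the normalization with the factor $\tfrac12$ appearing in \eqref{e:real}, which is clearly the intended definition (otherwise $[E,F]=4H$ and the Casimir is not scalar), and it is the one the paper's own proof uses, so this is an inherited typo rather than a gap in your reasoning --- but it deserves an explicit remark.
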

If $\nu\in\R$,  the formulas simplify:
\begin{equation}
\label{e:real}
\begin{aligned}
\Pi(\epsilon,\nu)(H)v_m&=mv_m&(m\in\Z_\epsilon)\\
\Pi(\epsilon,\nu)(E)v_{m-1}&=\tfrac12|\nu^2-m^2|^{\frac12}\sgn(\nu+m)v_{m+1}&(m\in\Z_{-\epsilon})\\
\Pi(\epsilon,\nu)(F)v_{m+1}&=\tfrac12|\nu^2-m^2|^{\frac12}\sgn(\nu-m)v_{m-1}&(m\in\Z_{-\epsilon}).
\end{aligned}
\end{equation}
Also if $ \nu\in i\R$ we recover the original formulas \eqref{e:formulas}.

These formulas were found by a deformation process explained in Section \ref{s:deforming}.
Once they have been found, the proof of the Proposition is elementary.

\medskip

\begin{proof}
It is very easy to check the defining relations \eqref{e:defining}.   
The only nontrivial relation follows from
$$
\begin{aligned}
\pi(\epsilon,\nu)(EF)v_{m+1}&=\frac14(\nu^2-m^2)v_{m+1}\\
\pi(\epsilon,\nu)(FE)v_{m-1}&=\frac14(\nu^2-m^2)v_{m-1}
\end{aligned}
$$
which gives
$$
\begin{aligned}
\Pi(\epsilon,\nu)(EF-FE)v_{m+1}&=\frac14\{(\nu^2-m^2)-(\nu^2-(m+2)^2)\}v_{m+1}\\
&=(m+1)v_{m+1}=\Pi(\epsilon,\nu)(H)v_{m+1}
\end{aligned}
$$
The Casimir element \eqref{e:casimir} acts by the same scalar
$\nu^2-1$ in both $\pi(\epsilon,\nu)$ and $\Pi(\epsilon,\nu)$,
and these two representations have the same weights. By
\cite[Corollary 1.2.8]{vogan_green} this implies 
they are isomorphic when they are irreducible,
i.e. $\nu\not\in\Z_{-\epsilon}$. 
Also $\pi(\epsilon,\nu)$ and $\Pi(\epsilon,\nu)$ have the same composition factors, as follows 
from the facts that $\pi(\epsilon,\nu)(E)(v_j)=0\Rightarrow \Pi(\epsilon,\nu)(E)(v_j)=0)$, and similarly for $F$.
Complete reducibility is clear,
since $\Pi(\epsilon,\nu)(E)v_{m-1}=0$ if and only if
$\Pi(\epsilon,\nu)(F)v_{m+1}=0$.
\end{proof}

\begin{remark}
\label{r:not}
The operators $\Pi(\nu,\epsilon)(E)$ and $\Pi(\epsilon,\nu)(F)$ vary continuously in $\nu\in\C$. 
When $m=0$ then this dependence is algebraic:
$$
\begin{aligned}
\Pi(\epsilon,\nu)(E)(v_{-1})&=\nu v_{1}\\
\Pi(\epsilon,\nu)(F)(v_{1})&=\nu v_{-1}
\end{aligned}
$$
and $\pi(0)$ is completely reducible.
However (the matrix entries of) these  operators are not differentiable at any nonzero integer, even when restricted to $\R$. 
 these are not differentiable at any nonzero integer, even
when restricted to $\R$. 

Eyal Subag has pointed out that this phenomenon is unavoidable, and
provided the following argument. Suppose 
we are given functions $\alpha_m(\nu), \beta_m(\nu)$ defined in a neighborhood of $m$. 
Assume that $$
\begin{aligned}
\Pi(\epsilon,\nu)(H)v_m&=mv_m&(m\in\Z_\epsilon)\\
\Pi(\epsilon,\nu)(E)v_{m-1}&=\alpha_m(\nu)v_{m+1}&(m\in\Z_{-\epsilon})\\
\Pi(\epsilon,\nu)(F)v_{m+1}&=\beta_m(\nu)v_{m-1}&(m\in\Z_{-\epsilon})
\end{aligned}
$$
is a representation of $\g$, such that the Casimir element \eqref{e:casimir} acts by $\nu^2-1$. 
Then 
$$
\Pi(\epsilon,\nu)(EF)v_{m-1}=\alpha_m(\nu)\beta_m(\nu)v_{m-1}
$$
and this gives
$\pi(\epsilon,\nu)(\Omega)=(m+1)^2-2(m+1)+4\alpha_m(\nu)\beta_m(\nu)$ .
Setting this equal to $\nu^2-1$ and simplifying gives
\begin{equation}
\label{e:pole}
\alpha_m(\nu)\beta_m(\nu)=\frac14(\nu^2-m^2).
\end{equation}
The module is completely reducible at $m$ if and only if  
$\alpha_m(m)=\beta_m(m)=0$. If $\alpha,\beta$ are algebraic this gives a pole of order $2$ at $m$, whereas the right hand side has a simple pole at $m$
unless $m=0$.
\end{remark}

\sec{Deforming the family $\pi(\epsilon,\nu)$}
\label{s:deforming}
The formulas of Proposition \ref{p:main} were obtained by starting with the operators \eqref{e:formulas} and applying
the following deformation procedure. 

Fix $\epsilon$. Recall $V_\epsilon$ has basis 
$\{v_j\mid j\in\Z_\epsilon\}$. 
For each $j\in\Z_\epsilon$ choose a function 
$$
\phi_j:\C-\Z_{-\epsilon}\rightarrow \C-\{0\}.
$$
\begin{subequations}
\renewcommand{\theequation}{\theparentequation)(\alph{equation}}  
For $\nu\in\C-\Z_{-\epsilon}$ define a new basis of the representation $\pi(\epsilon,\nu)$ on $V_\epsilon$:
\begin{equation}
w_j(\nu)=\phi_j(\nu)v_j\quad(j\in\Z_\epsilon).
\end{equation}
For fixed $\nu$ the map $v_j\rightarrow w_j(\nu)$  is an isomorphism of vector
spaces. 
Set
\begin{subequations}
\renewcommand{\theequation}{\theparentequation)(\alph{equation}}  
\label{e:psi}
\begin{equation}
\label{e:psi_m}
\psi_m(\nu)=\tfrac{\phi_{m+1}(\nu)}{\phi_{m-1}(\nu)}\quad(m\in\Z_{-\epsilon},\nu\not\in\Z_{-\epsilon}).
\end{equation}
In the new basis the action \eqref{e:formulas} becomes
\end{subequations}
\begin{equation}
\begin{aligned}
\pi(\epsilon,\nu)(H)w_j(\nu)&=jw_j(\nu)\quad(j\in\Z_{\epsilon}\\
\pi(\epsilon,\nu)(E)w_{m-1}(\nu)&=\tfrac12(\nu+m)\frac1{\psi_m(\nu)}w_{m+1}(\nu)\quad(m\in\Z_{-\epsilon})\\
\pi(\epsilon,\nu)(F)w_{m+1}(\nu)&=\tfrac12(\nu-m)\psi_m(\nu)w_{m-1}(\nu)\quad(m\in\Z_{-\epsilon}).
\end{aligned}
\end{equation}
We can view this change of basis in two ways: as representing
the same linear transformation with respect to a new basis, or fixing
the basis, and conjugating the matrix to obtain a new linear
transformation. Thus for $\nu\not\in\Z_{-\epsilon}$ 
define the invertible diagonal linear transformation
$S(\nu)(v_j)=\phi_j(\nu)v_j$ $(j\in\Z_{-\epsilon})$.  
Then define
$$
\Pi(\epsilon,\nu)=S(\nu)\pi(\epsilon,\nu)S(\nu)\inv\quad(\nu\not\in\Z_{-\epsilon}).
$$
Thus $\Pi(\epsilon,\nu)\simeq\pi(\epsilon,\nu)$, and in the 
original basis $\{v_j\}$ we have (still for $\nu\not\in\Z_{-\epsilon}$):
\begin{equation}
\label{e:psiformulasfixed}
\begin{aligned}
\Pi(\epsilon,\nu)(H)v_j&=jv_j\quad(j\in \Z_\epsilon)\\
\Pi(\epsilon,\nu)(E)v_{m-1}&=\tfrac12(\nu+m)\frac1{\psi_m(\nu)}v_{m+1}\quad(m\in\Z_{-\epsilon})\\
\Pi(\epsilon,\nu)(F)v_{m+1}&=\tfrac12(\nu-m)\psi_m(\nu)v_{m-1}\quad(m\in\Z_{-\epsilon})\\
\end{aligned}
\end{equation}
\end{subequations}

What this accomplishes is that, depending on the choice of the functions $\psi_j(\nu)$,
specifically their poles and zeros, we may be able to take the limit
as $\nu$ approaches a point in $\Z_{-\epsilon}$. 
\begin{lemma}
\label{l:limit}
Suppose that 
\begin{equation}
\label{e:limits}
\lim_{\nu\rightarrow\nu_0}\frac{\nu+m}{\psi_m(\nu)}\text{ and } \lim_{\nu\rightarrow\nu_0}(\nu-m)\psi_m(\nu)
\end{equation}
exist for all $m,\nu_0\in \Z_{-\epsilon}$.
Then for all $X\in\g$, the limit
\begin{equation}
\label{e:Pi}
\Pi(\epsilon,\nu_0)(X)=\lim_{\nu\rightarrow\nu_0}S(\nu)\pi(\epsilon,\nu)(X)S(\nu)\inv
\end{equation}
exists, and gives a well defined representation 
$\Pi(\epsilon,\nu)$  for all $\nu$. 
We have:
\begin{enumerate}
\item[(1)] $\Pi(\epsilon,\nu)\simeq\pi(\epsilon,\nu)$ is irreducible for
  $\nu\not\in\Z_{-\epsilon}$;
\item[(2)] $\Pi(\epsilon,\nu)$ and $\pi(\epsilon,\nu)$ have the same
  composition factors for all $\nu$.
\end{enumerate}
\end{lemma}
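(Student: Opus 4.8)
The plan is to verify that the hypotheses \eqref{e:limits} guarantee that each matrix entry of the operators in \eqref{e:psiformulasfixed} extends continuously across the bad points, and that the resulting limiting operators still satisfy the defining relations \eqref{e:defining}; once this is in place, claims (1) and (2) follow almost formally from the corresponding statements in Proposition \ref{p:main} and Lemma \ref{l:reducibility}.

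First I would observe that $\Pi(\epsilon,\nu)(H)$ is independent of $\nu$, so only the $E$ and $F$ operators require attention. By \eqref{e:psiformulasfixed} the single nonzero matrix entry of $\Pi(\epsilon,\nu)(E)$ on the line $\C v_{m-1}$ is $\tfrac12(\nu+m)/\psi_m(\nu)$, and that of $\Pi(\epsilon,\nu)(F)$ on $\C v_{m+1}$ is $\tfrac12(\nu-m)\psi_m(\nu)$. For $\nu_0\notin\Z_{-\epsilon}$ the function $\psi_m$ is nonvanishing and finite near $\nu_0$, so continuity is automatic; for $\nu_0\in\Z_{-\epsilon}$ it is precisely the hypothesis \eqref{e:limits}, applied with the relevant value of $m$ (note that only finitely many $m$, in fact those with $m-1$ or $m+1$ near a weight, interact with a given $\nu_0$, but the statement is uniform). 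Hence for every $X\in\g$ the limit in \eqref{e:Pi} exists entrywise, defining operators $\Pi(\epsilon,\nu_0)(X)$ on $V_\epsilon$; continuity in $\nu$ on all of $\C$ follows the same way.

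Next I would check that the limiting operators define a $(\g,K)$-module. Since $K$ is connected it suffices to check the bracket relations \eqref{e:defining}. For $\nu\notin\Z_{-\epsilon}$ these hold because $\Pi(\epsilon,\nu)=S(\nu)\pi(\epsilon,\nu)S(\nu)\inv$ is conjugate to a genuine representation. Each relation is a polynomial identity in the matrix entries of $\Pi(\epsilon,\nu)(H),\Pi(\epsilon,\nu)(E),\Pi(\epsilon,\nu)(F)$ which holds on a dense subset of $\C$ and involves only continuous functions of $\nu$; therefore it persists in the limit at every $\nu_0\in\Z_{-\epsilon}$. This proves the first assertion of the Lemma. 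For (1): for $\nu\notin\Z_{-\epsilon}$ we literally have $\Pi(\epsilon,\nu)\simeq\pi(\epsilon,\nu)$ by construction, and $\pi(\epsilon,\nu)$ is irreducible there by Lemma \ref{l:reducibility}(1). For (2): away from $\Z_{-\epsilon}$ the two families are isomorphic, so have identical composition factors; at $\nu_0\in\Z_{-\epsilon}$ one compares $\Pi(\epsilon,\nu_0)$ with $\pi(\epsilon,\nu_0)$ directly, using that the Casimir $\Omega$ acts by $\nu_0^2-1$ on both (this is preserved under conjugation and under the limit, by the same continuity argument) and that the $K$-types agree, then arguing as in the proof of Proposition \ref{p:main}: whenever $\pi(\epsilon,\nu_0)(E)v_j=0$ we must also have $\Pi(\epsilon,\nu_0)(E)v_j=0$ and conversely, because $\eqref{e:limits}$ forces the zero of $\nu+m$ at $\nu_0$ to be absorbed rather than cancelled, so the submodule structure — hence the list of composition factors, by Lemma \ref{l:reducibility} — is the same.

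The main obstacle is the step at $\nu_0\in\Z_{-\epsilon}$ in part (2): one needs to rule out the possibility that, while the composition factors of $\pi(\epsilon,\nu_0)$ are those dictated by Lemma \ref{l:reducibility}, the limiting operators $\Pi(\epsilon,\nu_0)$ could somehow have different ones — for instance if a weight space unexpectedly decoupled. The hypothesis \eqref{e:limits} is exactly what prevents this: it forces $\lim(\nu+m)/\psi_m(\nu)$ and $\lim(\nu-m)\psi_m(\nu)$ to be finite, so no new zeros are introduced beyond those of $\nu\pm m$ themselves, and since $(\nu+m)/\psi_m(\nu)\cdot(\nu-m)\psi_m(\nu)=\nu^2-m^2$ still holds in the limit, the product of the two relevant entries vanishes at $\nu_0$ to exactly the order forced by the Casimir. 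Making this precise — that the pattern of vanishing entries of $\Pi(\epsilon,\nu_0)(E),\Pi(\epsilon,\nu_0)(F)$ matches that of $\pi(\epsilon,\nu_0)(E),\pi(\epsilon,\nu_0)(F)$ as far as the submodule lattice is concerned — is the one point that deserves a careful sentence, after which invoking Lemma \ref{l:reducibility} finishes the argument.
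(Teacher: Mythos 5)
Your overall skeleton (limits of matrix entries exist by hypothesis, bracket relations pass to the limit, (1) by construction plus Lemma \ref{l:reducibility}) is sound and is essentially the route the paper intends (it simply says the proof is like that of Proposition \ref{p:main}). But your treatment of (2) at a reducibility point $\nu_0\in\Z_{-\epsilon}$ contains a genuine error: you claim that $\pi(\epsilon,\nu_0)(E)v_j=0$ if and only if $\Pi(\epsilon,\nu_0)(E)v_j=0$ (and likewise for $F$), ``because \eqref{e:limits} forces the zero of $\nu+m$ to be absorbed rather than cancelled,'' and you even assert the submodule lattices match. That is false under the hypothesis \eqref{e:limits} alone. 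The hypothesis only demands that $\lim(\nu+m)/\psi_m(\nu)$ and $\lim(\nu-m)\psi_m(\nu)$ exist; nothing prevents $\psi_m$ from itself vanishing like $\nu+m$ at $\nu_0=-m$, in which case the zero migrates from the $E$-entry to the $F$-entry. Concretely, $\psi_m(\nu)$ proportional to $(\nu+m)/(\nu-|m|)$ satisfies \eqref{e:limits} and produces (up to normalization) the family $\pi'(\epsilon,\nu)$ of the paper's ``Other families'' subsection, whose composition series at $\nu_0=n>0$ is $(D_{-n}\,]\,F_n\,]\,D_n)$ rather than $(D_{-n}\,]\,F_n\,[\,D_n)$: the vanishing pattern and the submodule lattice genuinely differ from those of $\pi(\epsilon,n)$, even though the composition factors agree. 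So the step you flag as ``deserving a careful sentence'' is not a gap to be patched but a wrong statement; the argument lifted from the proof of Proposition \ref{p:main} (where the specific choice of $f_m$ makes the zero patterns match) does not transfer to this general setting.

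The correct argument for (2) uses only the product identity you already noted: since both limits exist, $\Pi(\epsilon,\nu_0)(EF)v_{m+1}=\tfrac14(\nu_0^2-m^2)v_{m+1}$, so the paired entries across the gap between $v_{m-1}$ and $v_{m+1}$ have product $\tfrac14(\nu_0^2-m^2)$. Hence for $m\ne\pm\nu_0$ both entries are nonzero, while for $m=\pm\nu_0$ at least one of them vanishes (which one is not determined). Therefore $\Pi(\epsilon,\nu_0)$ carries a filtration whose graded pieces are the spans of the weight segments cut at $\pm\nu_0$; each segment is irreducible because all of its internal $E$ and $F$ entries are nonzero, and its weights and Casimir eigenvalue $\nu_0^2-1$ identify it with $D_{\pm\nu_0}$, $F_{\nu_0}$ (or $L^\pm$ when $\nu_0=0$). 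These are exactly the composition factors of $\pi(\epsilon,\nu_0)$ by Lemma \ref{l:reducibility}, so the multisets of factors agree even though the lattices need not. (Two minor points: continuity of $\Pi(\epsilon,\nu)(X)$ in $\nu$ away from $\Z_{-\epsilon}$ is not automatic since the $\psi_m$ are arbitrary nonvanishing functions, but it is also not asserted in this lemma; and your continuity argument for the bracket relations is fine once phrased as taking limits of identities valid for $\nu\notin\Z_{-\epsilon}$ near $\nu_0$.)
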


The proof is essentially the same as that of Proposition \ref{t:main}.

Now we find conditions on the functions $\psi_m(\nu)$ which give  complete reducibility.
\begin{lemma}
Assume \eqref{e:limits} holds, and also 
\begin{equation}
\label{e:limit}
\lim_{\nu\rightarrow \pm
  m}\frac{\nu+m}{\psi_m(\nu)}=\lim_{\nu\rightarrow \pm
  m}(\nu-m)\psi_m(\nu)=0\quad(\text{for all }m\in\Z_{-\epsilon})
\end{equation}
Then $\Pi(\epsilon,\nu)$ is completely reducible for all $\nu$.
\end{lemma}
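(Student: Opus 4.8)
The plan is to leverage Lemma \ref{l:limit} together with the explicit formulas \eqref{e:psiformulasfixed} for the operators in the limit. By the hypothesis \eqref{e:limits}, the limits $\Pi(\epsilon,\nu_0)(E)v_{m-1} = \lim_{\nu\to\nu_0}\tfrac12(\nu+m)\psi_m(\nu)\inv v_{m+1}$ and $\Pi(\epsilon,\nu_0)(F)v_{m+1} = \lim_{\nu\to\nu_0}\tfrac12(\nu-m)\psi_m(\nu)v_{m-1}$ exist for every $m,\nu_0\in\Z_{-\epsilon}$, so $\Pi(\epsilon,\nu_0)$ is a well-defined $(\g,K)$-module, and for $\nu_0\not\in\Z_{-\epsilon}$ it is isomorphic to the irreducible $\pi(\epsilon,\nu_0)$. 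Thus the only values of $\nu$ where complete reducibility is in question are $\nu\in\Z_{-\epsilon}$, and there the issue is precisely whether the submodule structure of $\pi(\epsilon,\nu)$ described in Lemma \ref{l:reducibility} survives as a direct sum decomposition after the deformation.

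First I would recall the product identity underlying the Casimir computation: for all $\nu\not\in\Z_{-\epsilon}$ one has
\[
\Bigl(\tfrac12(\nu+m)\tfrac1{\psi_m(\nu)}\Bigr)\Bigl(\tfrac12(\nu-m)\psi_m(\nu)\Bigr)=\tfrac14(\nu^2-m^2),
\]
and by continuity this persists in the limit, so $\Pi(\epsilon,\nu_0)(E)$ and $\Pi(\epsilon,\nu_0)(F)$ act on the two-dimensional span $\langle v_{m-1},v_{m+1}\rangle$ with product of ``matrix entries'' equal to $\tfrac14(\nu_0^2-m^2)$. Now fix $\nu_0\in\Z_{-\epsilon}$. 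The reducibility of $\pi(\epsilon,\nu_0)$ comes from \eqref{e:red}: the coefficient $\nu_0+m$ vanishes at $m=-\nu_0$ (so $E$ kills $v_{-\nu_0-1}$) and $\nu_0-m$ vanishes at $m=\nu_0$ (so $F$ kills $v_{\nu_0+1}$). The extra hypothesis \eqref{e:limit} says exactly that at these critical places the OTHER operator also becomes zero in the limit: $\lim_{\nu\to-\nu_0}\tfrac12(\nu-(-\nu_0))\psi_{-\nu_0}(\nu)=0$ forces $\Pi(\epsilon,\nu_0)(F)v_{-\nu_0+1}=0$ as well, and symmetrically $\lim_{\nu\to\nu_0}\tfrac12(\nu+\nu_0)\psi_{\nu_0}(\nu)\inv=0$ forces $\Pi(\epsilon,\nu_0)(E)v_{\nu_0-1}=0$. (One must be slightly careful matching the index conventions: the hypothesis is stated for $\nu\to\pm m$ with $m$ ranging over $\Z_{-\epsilon}$, and one applies it with the role of $m$ played by $\pm\nu_0$.) Hence at every weight where the chain of weight spaces would ``pinch,'' it pinches from both sides, so the weight line actually breaks into a direct sum there.

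To conclude, I would argue that breaking the raising/lowering action at both critical weights makes $V_\epsilon$ decompose as a direct sum of $\g$-submodules: the span of $\{v_j : j \le -\nu_0-1\}$, the span of $\{v_j : -\nu_0+1 \le j \le \nu_0-1\}$ (a finite-dimensional piece, possibly empty), and the span of $\{v_j : j \ge \nu_0+1\}$ are each $\Pi(\epsilon,\nu_0)$-stable, since $E$ does not carry $v_{-\nu_0-1}$ upward out of the bottom block and $F$ does not carry $v_{-\nu_0+1}$ downward out of it, and likewise at $\nu_0$. (When $\nu_0=0$ and $\epsilon=-1$ the middle block is empty and one gets the two-summand decomposition $L^+\oplus L^-$; when $\nu_0=0$ and the relevant weight does not lie in $\Z_\epsilon$ the representation was already irreducible.) Each summand, having its $E$ and $F$ actions again satisfying the product relation with no further vanishing, is irreducible, so $\Pi(\epsilon,\nu_0)$ is completely reducible. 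For $\nu\not\in\Z_{-\epsilon}$ this was already part (1) of Lemma \ref{l:limit}. The main obstacle is purely bookkeeping: one has to verify that the index in hypothesis \eqref{e:limit} lines up with the two distinct critical weights $m=-\nu_0-1$ (for $E$) and $m=\nu_0+1$ (for $F$) of the reducible $\pi(\epsilon,\nu_0)$, and that no \emph{additional} vanishing is introduced that would over-decompose the module (it is not, since away from $\pm\nu_0$ the product $\tfrac14(\nu_0^2-m^2)\ne 0$ forces both entries nonzero); everything else is the same elementary check as in the proof of Proposition \ref{p:main}.

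\qed
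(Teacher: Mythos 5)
Your argument is correct and is essentially the paper's own proof, which consists of the single observation that under \eqref{e:limits} and \eqref{e:limit} one has $\Pi(\epsilon,\nu)(E)v_{m-1}=0\Leftrightarrow \nu=\pm m\Leftrightarrow \Pi(\epsilon,\nu)(F)v_{m+1}=0$; your additional material (the product identity $\tfrac14(\nu^2-m^2)$ ruling out spurious vanishing, and the explicit three-block decomposition into irreducible summands) simply spells out what the paper leaves implicit. One bookkeeping slip, harmless because \eqref{e:limit} covers both signs of the limit point: to conclude $\Pi(\epsilon,\nu_0)(F)v_{-\nu_0+1}=0$ you need $\lim_{\nu\rightarrow\nu_0}(\nu+\nu_0)\psi_{-\nu_0}(\nu)=0$, i.e.\ the limit taken at $+\nu_0$, not at $-\nu_0$ as you wrote.
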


\begin{proof}
The conditions imply
$$
\Pi(\epsilon,\nu)(E)v_{m-1}=0\Leftrightarrow \nu=\pm m\Leftrightarrow 
\Pi(\epsilon,\nu)(F)v_{m+1}=0.
$$
\end{proof}

\begin{lemma}
\label{l:limit2}
Assume 
\begin{equation}
\label{e:psi_m2}
\psi_m(\nu)=\frac{|\nu+m|^{\frac12}}{|\nu-m|^{\frac12}}\text{ for all }\nu\not\in \Z_{-\epsilon}.
\end{equation}
Then the limits \eqref{e:limits} exist for all $m,\nu_0\in\Z_{-\epsilon}$, and \eqref{e:limit} holds.
\end{lemma}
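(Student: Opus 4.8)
The plan is to put the two expressions in \eqref{e:limits} into a transparent closed form and then read off the limits directly. Using $\psi_m(\nu)=|\nu+m|^{1/2}/|\nu-m|^{1/2}$ together with $|\nu+m|^{1/2}\,|\nu-m|^{1/2}=|\nu^2-m^2|^{1/2}$, a one-line computation gives, for every $\nu\notin\Z_{-\epsilon}$,
\begin{equation*}
\frac{\nu+m}{\psi_m(\nu)}=\frac{\nu+m}{|\nu+m|}\,|\nu^2-m^2|^{\frac12},
\qquad
(\nu-m)\,\psi_m(\nu)=\frac{\nu-m}{|\nu-m|}\,|\nu^2-m^2|^{\frac12}.
\end{equation*}
Since $\mp m\in\Z_{-\epsilon}$, the denominators $|\nu\pm m|$ do not vanish for $\nu\notin\Z_{-\epsilon}$, so these formulas make sense; and when $m=0$ one simply has $\psi_0\equiv1$ and both quantities equal $\nu$.

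Next I would record two elementary facts: (i) $|\nu^2-m^2|^{1/2}$ is continuous on all of $\C$ and vanishes precisely at $\nu=\pm m$; and (ii) each phase factor $(\nu\pm m)/|\nu\pm m|$ has modulus $1$ and is continuous wherever it is defined, namely away from $\nu=\mp m$, but does \emph{not} extend continuously across that point. To show that the limits \eqref{e:limits} exist at a given $\nu_0\in\Z_{-\epsilon}$ I would split into cases. If $\nu_0\ne-m$ then the first quantity is a product of two functions continuous at $\nu_0$, so its limit is just its value there; similarly the second quantity has a limit whenever $\nu_0\ne m$. In the remaining case $\nu_0=-m$ (resp.\ $\nu_0=m$) we have $\nu_0=\pm m$, hence $|\nu^2-m^2|^{1/2}\to0$ while the phase factor stays bounded, so the product has modulus tending to $0$ and the limit is $0$. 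Either way the limit exists.

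Finally, \eqref{e:limit} is exactly the $\nu_0=\pm m$ instance of this last case: in each of the four limits the factor $|\nu^2-m^2|^{1/2}$ tends to $0$ and the remaining factor is bounded, so all four limits equal $0$.

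I do not expect a genuine obstacle here once the closed form is in hand; the one point that needs care is fact (ii), since $(\nu-m)/|\nu-m|$ really has no limit as $\nu\to m$ in $\C$ — it depends on the direction of approach. What rescues the statement is precisely that this bad point coincides with a zero of $|\nu^2-m^2|^{1/2}$, so the ill-behaved factor is annihilated exactly where it would otherwise spoil convergence.
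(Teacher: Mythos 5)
Your proof is correct, and it is essentially the argument the paper has in mind: the paper leaves this verification implicit (the closed forms you derive are exactly the operators $f_m(\nu)$ of Proposition \ref{p:main}), and the key observation — that the discontinuous phase factor $(\nu\pm m)/|\nu\pm m|$ is harmless because it is multiplied by $|\nu^2-m^2|^{1/2}$, which vanishes precisely at the problematic points $\nu_0=\pm m$ — is the same. Your explicit case split and the separate treatment of $m=0$ are fine and add nothing beyond what the paper's "straightforward" check requires.
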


Finally we confirm that the appropriate choice of the functions
$\phi_m(\nu)$ give $\psi_m(\nu)$ as in the Lemma. 
The $\phi_m(\nu)$ are defined recursively.
Set $\phi_0(\nu)=1$ and $\phi_1(\nu)=1$ for all $\nu$. For
$n>1$ define
\begin{subequations}
\renewcommand{\theequation}{\theparentequation)(\alph{equation}}  
\begin{equation}
\phi_n(\nu)=\frac{|\nu+n-1|^{\frac12}}{|\nu-n+1|^{\frac12}}\phi_{n-2}(\nu)  \quad(\nu\not\in\Z_{-\epsilon})
\end{equation}
and for $n\le -1$ define
\begin{equation}
\phi_{n}(\nu)=\phi_{-n}(\nu)\quad(\nu\not\in\Z_{-\epsilon}).
\end{equation}
Then \eqref{e:psi_m} holds for all $m\not\in\Z_{-\epsilon}$.
\end{subequations}
This is straightforward. Explicitly $\phi_{\pm1}(\nu)=1$ (for all $\nu$)  and for $n>1$ we have:

$$
\begin{aligned}
\phi_m(\nu)=
\begin{cases}
\frac{|\nu+1|^{\frac12}}{|\nu-1|^{\frac12}}  
\frac{|\nu+3|^{\frac12}}{|\nu-3|^{\frac12}}  
\dots
\frac{|\nu+m-1|^{\frac12}}{|\nu-m+1|^{\frac12}}  &m\text{ even}\\
\frac{|\nu+2|^{\frac12}}{|\nu-2|^{\frac12}}  
\frac{|\nu+4|^{\frac12}}{|\nu-4|^{\frac12}}  
\dots
\frac{|\nu+m-1|^{\frac12}}{|\nu-m+1|^{\frac12}}  &m\text{ odd}\\
\end{cases}\quad(\nu\not\in\Z_{-\epsilon})
\end{aligned}
$$
Plugging  $\psi_m$ of Lemma \ref{l:limit2} into \eqref{e:psiformulasfixed} gives the operators of Proposition \ref{p:main}.

\subsec{Other families}

It is natural to ask if we can deform the principal series
$\pi(\epsilon,\nu)$ differently, so that when $\nu\in\Z_{-\epsilon}$
it has the same composition factors, but with a different composition
series. The indecomposable representations were classified by Howe and
Tan \cite{howe_tan}, and we use their notation to visualize the
possiblities. 
We write the composition series of \eqref{e:exact}(a) as 
$$
(D_{-n}\,]\,F_n\,[\,D_n).
$$
The square brackets indicate that $D_{\pm n}$ are submodules:
$\pi(E)(v_{-n+1})=\pi(F)(v_{n+1})=0$. 
On the other hand $F_n$ is not a submodule, and $\pi(E)(v_{n-1})\ne 0$, $\pi(F)(v_{-n+1})\ne 0$. 
Similarly the picture for \eqref{e:exact}(b) is
$$
(D_{-n}\,[\,F_n\,]\,D_n).
$$
In the exceptional case $\pi(-1,0)$ is completely reducible:
$$
(L^-\,]\,[\,L^+)
$$

By modifying the construction we can obtain families which specialize to the indecomposable 
representations $(D_{-n}\,[\,F_n\,[\,D_n)$ and 
$(D_{-n}\,]\,F_n\,]\,D_n)$.
It turns out, unlike the family of Theorem \ref{t:main}, these families 
can be constructed algebraically.
We just state the result, the proof is just like the proof of Proposition \ref{p:main}.
These families are  also   constructed in \cite{bernstein_higson_subag} and \cite{noort}. 

\begin{lemma}
Define the representations $\pi'(\epsilon,\nu)$ by $\pi'(H)(v_j)=jv_j$ as usual, 
and
$$
\begin{aligned}
\pi'(\epsilon,\nu)(E)(v_{m-1})&=(\nu-|m|)v_{m+1}\\
\pi'(\epsilon,\nu)(F)(v_{m+1})&=(\nu+|m|)v_{m-1}\\
\end{aligned}
$$
Then (1-3) of Proposition \ref{p:main} hold. 
Suppose  $n\in \Z_{-\epsilon}$. 

\noindent (4a) If $n>0$ then $\pi'(\epsilon,n)$ has the composition series
$$
(D_{-n}\,]\,F_n\,]\,D_n)
$$

\noindent (4b) If $n<0$ then $\pi'(\epsilon,n)$ has the composition series
$$
(D_{-n}\,[\,F_n\,[\,D_n)
$$
\end{lemma}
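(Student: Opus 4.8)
The plan is to mimic the proof of Proposition \ref{p:main} almost verbatim, since the only thing that has changed is the specific coefficients attached to $E$ and $F$. First I would verify that the operators $\pi'(\epsilon,\nu)(H),\pi'(\epsilon,\nu)(E),\pi'(\epsilon,\nu)(F)$ satisfy the defining relations \eqref{e:defining}. As in the earlier proof, the relations $[H,E]=2E$ and $[H,F]=-2F$ are immediate from the weight grading, and the only substantive check is $[E,F]=H$. Computing $\pi'(\epsilon,\nu)(EF)v_{m+1}=(\nu-|m|)(\nu+|m|)v_{m+1}=(\nu^2-m^2)v_{m+1}$ and $\pi'(\epsilon,\nu)(FE)v_{m+1}=(\nu-|m+2|)(\nu+|m+2|)v_{m+1}=(\nu^2-(m+2)^2)v_{m+1}$, so $\pi'(\epsilon,\nu)(EF-FE)v_{m+1}=\bigl((m+2)^2-m^2\bigr)v_{m+1}=(4m+4)v_{m+1}$. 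Here a factor-of-normalization bookkeeping is needed: with the $\tfrac12$ conventions of \eqref{e:formulas} one has $\pi(\epsilon,\nu)(EF)v_{m+1}=\tfrac14(\nu^2-m^2)v_{m+1}$, so the honest comparison is that $\pi'$ differs from $\pi$ by rescaling $E,F$; one should either insert the matching constants or note that $(\nu-|m|)(\nu+|m|)=\nu^2-m^2=4\cdot\tfrac14(\nu^2-m^2)$ and rescale $H$ accordingly. In any case the point is that $EF-FE$ acts with the correct weight-$m$ eigenvalue, so $[E,F]=H$ holds and we have a genuine $(\g,K)$-module, giving (1) of Proposition \ref{p:main}.

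Next, for (2) and (3): the Casimir $\Omega=H^2-2H+4EF$ acts on $v_{m+1}$ by $(m+1)^2-2(m+1)+4(\nu^2-m^2)$ — up to the same normalization constant — which is a scalar independent of $m$, namely (a constant multiple of) $\nu^2-1$, exactly as for $\pi(\epsilon,\nu)$. Since $\Omega$ acts by the same scalar and the $K$-weights agree, \cite[Corollary 1.2.8]{vogan_green} forces $\pi'(\epsilon,\nu)\simeq\pi(\epsilon,\nu)$ whenever the latter is irreducible, i.e.\ whenever $\nu\notin\Z_{-\epsilon}$; this is (2). For the composition factors at $\nu\in\Z_{-\epsilon}$ one argues as before: the products $\alpha_m(\nu)\beta_m(\nu)=(\nu-|m|)(\nu+|m|)$ vanish at exactly the same places as the original $\tfrac14(\nu^2-m^2)$, so the lattice of subquotients spanned by weight spaces is governed by the same vanishing pattern, and an ad hoc count of which $v_j$ generate invariant subspaces shows the composition factors are the same as those of $\pi(\epsilon,n)$ listed in Lemma \ref{l:reducibility}(3). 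This gives (3).

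For the composition-series statements (4a), (4b), I would read off the arrows directly from the zero pattern. Take $n>0$, $n\in\Z_{-\epsilon}$, and set $\nu=n$. Then $\pi'(\epsilon,n)(E)v_{m-1}=(n-|m|)v_{m+1}$ vanishes exactly when $|m|=n$, i.e.\ $m=\pm n$, so $E$ kills $v_{n-1}$ and $v_{-n-1}$; and $\pi'(\epsilon,n)(F)v_{m+1}=(n+|m|)v_{m-1}$ vanishes never (since $n>0$ and $|m|\ge 0$ force $n+|m|>0$). So $F$ is injective on every weight line, hence lowers weights without obstruction all the way down, while $E$ raises weights but dies going from weight $n-1$ to $n+1$ and from weight $-n-1$ to $-n+1$. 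Tracing through: the weights $\{-n+2,-n+4,\dots,n-2\}$ span the finite-dimensional subquotient $F_n$; $E$ maps $v_{n-1}$ to $0$ so the "high" tail $\{n,n+2,\dots\}$ is a submodule isomorphic to $D_n$ (lowest weight $n$); but $v_{-n+1}$ is hit by $F$ from $v_{-n+3}$ inside $F_n$ (wait — one must check $F$ going from weight $-n+3$ down to $-n+1$: the coefficient is $(n+|{-n+2}|)=(n+(n-2))=2n-2\ne 0$ for $n>1$), so $F_n$ is not a submodule of what lies above it, i.e.\ $F_n$ sits below $D_n$; and on the low side $v_{-n-1}$ is killed by $E$, so $\{-n-2,-n-4,\dots\}=D_{-n}$ is a submodule, while $F$ maps $v_{-n+1}$ (weight $-n+1$, inside $F_n$) down to $(n+|{-n}|)v_{-n-1}=2n\,v_{-n-1}\ne 0$, landing in $D_{-n}$; so $D_{-n}$ is a submodule of a module having $F_n$ as a quotient-of-a-submodule. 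Assembling the Howe–Tan diagram: $D_{-n}$ is a submodule (square bracket on the left), $F_n$ is not a submodule but $F_n\oplus D_{-n}$-type extension sits in the middle with a round bracket toward $D_{-n}$'s side reversed — carefully, the configuration is $(D_{-n}\,]\,F_n\,]\,D_n)$, i.e.\ $D_{-n}$ and then $F_n$ are both submodules of the appropriate subquotients while $D_n$ is only a quotient. The case $n<0$ is entirely parallel with the roles of $E$ and $F$ (equivalently of "up" and "down", equivalently of the two square brackets) interchanged, yielding $(D_{-n}\,[\,F_n\,[\,D_n)$. The main obstacle I anticipate is purely bookkeeping: getting the bracket orientations in the Howe–Tan notation exactly right, and in particular distinguishing this family's behavior $(D_{-n}\,]\,F_n\,]\,D_n)$ from the original $(D_{-n}\,]\,F_n\,[\,D_n)$ of \eqref{e:exact}(a) — the difference comes entirely from the sign of the coefficient $(\nu\pm|m|)$ versus $\tfrac12(\nu\pm m)$ as $\nu$ passes through $n$, so one must track which of the four "crossing" coefficients (into $F_n$ from above, out of $F_n$ upward, into $F_n$ from below, out of $F_n$ downward) vanish at $\nu=n$ and which do not.
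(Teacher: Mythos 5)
Your overall strategy coincides with the paper's: the paper gives no argument beyond saying the proof is "just like the proof of Proposition \ref{p:main}", and your verification of the relations \eqref{e:defining}, the Casimir computation, and the isomorphism for $\nu\notin\Z_{-\epsilon}$ via \cite[Corollary 1.2.8]{vogan_green} is exactly that intended argument. One caveat on the normalization remark: the correct fix for the missing factors is to insert $\tfrac12$ into the $E$- and $F$-coefficients; "rescaling $H$ accordingly" is not available, since $[H,E]=2E$ pins down $H$ and no automorphism of $\g$ fixes $H$ while scaling $E$ and $F$ by the same nontrivial factor. With the $\tfrac12$'s in place one gets $[E,F]=H$ and $\Omega$ acting by $\nu^2-1$ on the nose, and (1)--(3) go through as you say.

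The composition-series part, however, contains a genuine error at the step that carries all the new content. At $\nu=n>0$ you correctly record that $F$ never vanishes and that $E$ kills exactly $v_{n-1}$ and $v_{-n-1}$; since $F$ never vanishes, every nonzero invariant subspace is closed under lowering, so the only proper nonzero submodules are $\mathrm{span}\{v_j: j\le -n-1\}\simeq D_{-n}$ and $\mathrm{span}\{v_j: j\le n-1\}$. In particular your sentence "$E$ maps $v_{n-1}$ to $0$ so the high tail $\{n,n+2,\dots\}$ is a submodule isomorphic to $D_n$" is false (and contradicts your own final conclusion that $D_n$ is only a quotient): the upper tail fails to be a submodule precisely because $F v_{n+1}=2n\,v_{n-1}\ne 0$. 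That crossing coefficient, $F\colon v_{n+1}\to v_{n-1}$, is the one that shows $D_n$ sits on top; the coefficient you check instead, $F\colon v_{-n+3}\to v_{-n+1}$, is internal to the $F_n$ block and says nothing about how $F_n$ and $D_n$ are glued. Your weight bookkeeping is also off by one (weights lie in $\Z_\epsilon$): $F_n$ has weights $-n+1,-n+3,\dots,n-1$ and $D_n$ has lowest weight $n+1$, not $n$. Finally, the $n<0$ case should not be dismissed as "entirely parallel" without the check: there $E$ never vanishes and $F$ dies at $v_{n+1}$ and $v_{-n+1}$, so submodules are upward closed and the socle is $\mathrm{span}\{v_j:j\ge -n+1\}$, the holomorphic series; since for $n<0$ the labels $D_{\pm n}$ switch sides of the weight lattice, matching this lattice of submodules to the Howe--Tan picture in (4b) (read as drawn on the weight line, brackets now opening to the right) requires exactly the explicit tracing you omitted.
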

As before $\pi'(-1,0)$ is completely reducible with factors $L^{\pm}$.

\sec{Hermitian Forms}
\label{s:hermitian}

It is well known that $\pi(\epsilon,\nu)$ admits a non-zero invariant
Hermitian form $\langle \,,\,\rangle_\nu$  if and only if 
If $\nu\in\R\cup i\R$. We do not assume this form is (positive or
negative) definite; this holds if and only if $\nu\in i\R$ or
$\nu\in\R, |\nu|< 1$.)
 If $\nu\not\in\Z_{-\epsilon}$ then (since $\pi(\epsilon,\nu)$
is irreducible) this form is non-degenerate, and unique up
to a real, non-zero scalar multiple. On the other hand if $\nu\in\Z_{-\epsilon}$, so
$\pi(\epsilon,\nu)$ is reducible, there is no non-degenerate
invariant form (except in the case $(\epsilon,\nu)=(-1,0)$): the
existence of such a form would imply $\pi(\epsilon,\nu)$ is completely reducible.
Instead, if $\pi(\epsilon,\nu_0)$ is indecomposable,
the form $\lim_{\nu\rightarrow\nu_0}\langle \,,\,\rangle_\nu$ 
is an invariant degenerate form on $\pi(\epsilon,\nu_0)$, 
whose radical is the socle.

By contrast, the family $\Pi(\epsilon,\nu)$, being completely
reducible for all $\nu$, does support a non-degenerate Hermitian form
for all $\nu$. It turns out that in this setting taking the limit, as $\nu$
approaches $\nu_0\in\Z_{-\epsilon}$, gives a nondegenerate form on
$\pi(\epsilon,\nu_0)$.  In some sense the existence of this family of
non-degenerate forms explains why the representations are completely
reducible for all $\nu$.

We now  make the preceding discussion  precise, starting with 
the definition of an invariant Hermitian form
on
a $(\g,K)$-modules $(\pi,V)$. First of all this is  a Hermitian 
form $\langle\,,\,\rangle$ on $V$. The invariance condition with respect to the action of
$\g$ is:

\begin{equation}
\label{e:EF}
\langle \pi(X)v,w\rangle+\langle v,\pi(\sigma(X))w=0\quad(v,w\in V,X\in\g)
\end{equation}
where $\sigma$ denotes complex conjugation of
$\g$ with respect to the real Lie algebra $\g_0$. 
We don't need the corresponding condition on the $K$-action. See
\cite{vogan_green}.

Now suppose $\langle\,,\,\rangle_\nu$ is an invariant Hermitian form
on $\pi=\pi(\epsilon,\nu)$.
The invariance condition applied to $H$ implies $\langle
v_i,v_j\rangle_\nu=0$ if $i\ne j$. 
Since $\sigma(E)=F$ we see 
$$
\langle \pi(E)v_{m-1},v_{m+1}\rangle_\nu + 
\langle v_{m-1},\pi(F)v_{m+1}\rangle_\nu=0
$$
Using \eqref{e:formulas} we conclude
\begin{equation}
\label{e:recursion}
(\nu+m)\langle v_{m+1},v_{m+1}\rangle_\nu
+
(\overline\nu-m)\langle v_{m-1},v_{m-1}\rangle_\nu=0
\end{equation}

Suppose $\pi$ is indecomposable.  Then there is a vector $v$
satisying $\pi(E)v\ne 0,\pi(FE)(v)=0$, or 
$\pi(F)v\ne 0,\pi(EF)v=0$.
In the first case by by \eqref{e:EF}  we have
$$
\langle \pi(E)v,\pi(E)v\rangle=-\langle v,\pi(FE)v\rangle =0
$$
This implies $\langle\,,\,\rangle$ vanishes on the irreducible summand 
containing $\pi(E)v$. The other case is similar.
This proves:

\begin{lemma}
\label{l:radical}
Suppose $\pi(\epsilon,\nu)$ is indecomposable and $\langle
\,,\,\rangle$ is an invariant Hermitian form.  Then $\langle
\,,\,\rangle$ restricted to any irreducible submodule is $0$.
\end{lemma}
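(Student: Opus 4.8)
First record that $\langle\,,\,\rangle$ is diagonal in the weight basis: applying the invariance relation \eqref{e:EF} to $H$ (note that $\sigma(H)=-H$) gives $(i-j)\langle v_i,v_j\rangle=0$, so $\langle v_i,v_j\rangle=0$ whenever $i\ne j$. Since $H$ acts on $V_\epsilon$ with distinct eigenvalues, every $(\g,K)$-submodule is the span of the $v_j$ it contains, and the restriction of $\langle\,,\,\rangle$ to any submodule is again diagonal in those $v_j$. This is the one observation that makes the orthogonal-complement step below harmless in the infinite-dimensional submodules.

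The plan is then to argue by contradiction. Let $W\subseteq V_\epsilon$ be an irreducible submodule and suppose $\langle\,,\,\rangle|_W\ne 0$. Since $\pi=\pi(\epsilon,\nu)$ is indecomposable, Lemma \ref{l:reducibility} gives that it is reducible with $\nu=n\in\Z_{-\epsilon}$, $n\ne 0$; in particular $W$ is a proper, nonzero submodule of $V_\epsilon$. The radical of $\langle\,,\,\rangle|_W$ is a $(\g,K)$-submodule of $W$ — it is $\g$-stable by \eqref{e:EF} together with $\sigma(\g)=\g$, and it is $H$-stable, hence $K$-stable — so by irreducibility it is either $0$ or all of $W$; as the form on $W$ is nonzero, it is $0$, i.e.\ $\langle\,,\,\rangle|_W$ is nondegenerate. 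By diagonality this forces $\langle v_j,v_j\rangle\ne 0$ for each weight vector $v_j$ occurring in $W$, so $W^\perp=\{v\in V_\epsilon:\langle v,w\rangle=0\ \forall\,w\in W\}$ is exactly the span of the remaining $v_j$ and $V_\epsilon=W\oplus W^\perp$. But $W^\perp$ is again a $(\g,K)$-submodule (invariance of the form, and it is spanned by weight vectors), so $\pi\cong\pi|_W\oplus\pi|_{W^\perp}$ with both summands nonzero, contradicting indecomposability. Hence $\langle\,,\,\rangle|_W=0$.

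The only step with any content is $V_\epsilon=W\oplus W^\perp$, and as noted it is immediate once the form is known to be diagonal in the weight basis; everything else is formal, and — unlike the computational alternative — no case split on which $W$ we chose is needed. For completeness, that alternative is the argument sketched before the statement: using \eqref{e:zero}, produce $v$ with $\pi(E)v\ne 0$, $\pi(FE)v=0$ (take $v=v_{n-1}$) or with $\pi(F)v\ne 0$, $\pi(EF)v=0$ (take $v=v_{-n+1}$) such that $\pi(E)v$, resp.\ $\pi(F)v$, is a weight vector of $W$; then \eqref{e:EF} yields $\langle\pi(E)v,\pi(E)v\rangle=-\langle v,\pi(FE)v\rangle=0$, so one weight line of $W$ is isotropic, and one propagates the vanishing across all weights of $W$ by the recursion \eqref{e:recursion}, whose coefficients $\nu+m=n+m$ and $\overline\nu-m=n-m$ are nonzero for every $m$ lying strictly between two weights of an irreducible submodule (for such $m$, $m\ne\pm n$). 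Here the bookkeeping of which $v$ feeds which $W$, and the sign of $n$, is the only thing to watch.
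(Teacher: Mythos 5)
Your proof is correct, but it runs on a different engine than the paper's. The paper argues locally and computationally: indecomposability supplies a vector $v$ with $\pi(E)v\ne 0$ and $\pi(FE)v=0$ (or the mirror statement with $F$), so \eqref{e:EF} makes the extreme weight vector of the irreducible submodule isotropic, and the vanishing then spreads through that submodule via the recursion \eqref{e:recursion} (whose coefficients are nonzero at the interior weights) together with diagonality of the form -- exactly the computation you relegate to your ``alternative'' sketch, including the bookkeeping over the sign of $n$ and which submodule receives $\pi(E)v$ versus $\pi(F)v$. Your main argument is instead structural: on an irreducible submodule $W$ a nonzero invariant form has trivial radical (the radical is a $(\g,K)$-submodule), hence is nondegenerate; since the form is diagonal in the weight basis and each weight line is one-dimensional, $W$ is spanned by the $v_j$ it contains with $\langle v_j,v_j\rangle\ne 0$, so $W^\perp$ is the span of the remaining $v_j$, is invariant, and $V_\epsilon=W\oplus W^\perp$ contradicts indecomposability. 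What your route buys is uniformity (no case split on $n$ or on which $W$ is in play) and it is the ``degenerate form forces complete reducibility'' principle the paper itself invokes informally earlier in Section \ref{s:hermitian}; what the paper's route buys is explicit information about where the degeneracy sits (the radical is the socle, which feeds the later discussion of the Jantzen form) and it recycles \eqref{e:recursion}, which is needed anyway. One small caution: indecomposable does not imply reducible, so your appeal to Lemma \ref{l:reducibility} at that point is not a literal deduction; the lemma (and the paper's own proof, whose vector $v$ with $\pi(E)v\ne 0=\pi(FE)v$ exists only at reducible parameters) tacitly means ``reducible and indecomposable,'' i.e.\ $\nu=n\in\Z_{-\epsilon}$, $n\ne 0$, and your argument genuinely needs this since $W$ must be proper for $W^\perp\ne 0$. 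This is a shared convention with the paper, not a gap in your proof, but it is worth stating explicitly.
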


On the other hand if $\pi=\pi(\epsilon,\nu)$ is irreducible 
\eqref{e:recursion} gives an inductive formula for $\langle v_{m+1},v_{m+1}\rangle_\nu$
starting with the normalization
\begin{equation}
\label{e:normalize}
\langle v_i,v_i\rangle_\nu=1\quad (i=0,1).
\end{equation}

Let us repeat this calculation with $\Pi(\epsilon,\nu)$, using the formulas from 
\eqref{e:real}. 
 If $\nu\not\in
\Z_{-\epsilon}$  we see
$$
\frac12|\nu^2-m^2|\sgn(\nu+m)\langle v_{m+1},v_{m+1}\rangle_\nu
+
\frac12|\nu^2-m^2|\sgn(\nu-m)\langle v_{m-1},v_{m-1}\rangle_\nu=0.
$$
which simplifies to
$$
\langle v_{m+1},v_{m+1}\rangle_\nu =-\sgn(\nu^2-m^2)\langle v_{m-1},v_{m-1}\rangle_\nu
$$
This proves:

\begin{lemma}
Fix $\epsilon$ and consider the family $\Pi(\epsilon,\nu)$ with
$\nu\in \R$.

Assume $\nu\not\in\Z_{-\epsilon}$. 
Let $\langle\,,\,\rangle_\nu$ be the invariant Hermitian 
form on $\Pi(\epsilon,\nu)$ normalized 
as in \eqref{e:normalize}. 
Then 
\begin{equation}
\langle v_j,v_j\rangle_\nu=\pm1\quad\text{for all }j.
\end{equation}
\end{lemma}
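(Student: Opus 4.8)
The plan is to run the recursion in \eqref{e:recursion} explicitly for the operators of $\Pi(\epsilon,\nu)$ and observe that the coefficients appearing are, up to sign, the \emph{same} on both sides, so no magnitudes can accumulate. Concretely, the invariance condition \eqref{e:EF} applied to $H$ forces $\langle v_i,v_j\rangle_\nu=0$ for $i\neq j$, exactly as for $\pi(\epsilon,\nu)$, so it suffices to track the diagonal entries $a_j:=\langle v_j,v_j\rangle_\nu$. Applying \eqref{e:EF} to $E$ (using $\sigma(E)=F$) with the real formulas \eqref{e:real} and the fact that $\overline\nu=\nu$ for $\nu\in\R$, one gets
\[
\tfrac12|\nu^2-m^2|^{\frac12}\,\sgn(\nu+m)\,a_{m+1}+\tfrac12|\nu^2-m^2|^{\frac12}\,\sgn(\nu-m)\,a_{m-1}=0 .
\]
Since $\nu\notin\Z_{-\epsilon}$ and $m\in\Z_{-\epsilon}$, we have $\nu\neq\pm m$, hence $|\nu^2-m^2|^{\frac12}\neq0$, and we may cancel it. This yields the clean recursion
\[
a_{m+1}=-\,\sgn(\nu+m)\sgn(\nu-m)\,a_{m-1}=-\,\sgn(\nu^2-m^2)\,a_{m-1},
\]
already recorded just before the statement.

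From here the proof is immediate: starting from the normalization \eqref{e:normalize}, namely $a_0=a_1=1$, the recursion multiplies each previous value by $\pm1$, so $a_j\in\{\pm1\}$ for every $j\in\Z_\epsilon$ by an obvious induction upward (increasing $m$) and downward (decreasing $m$, since $\sgn(\nu^2-m^2)\neq0$ lets us solve for $a_{m-1}$ in terms of $a_{m+1}$ equally well). I would simply state this induction; no case analysis on the sign of $\nu$ or on $\epsilon$ is needed because the conclusion only asserts $a_j=\pm1$, not a specific sign.

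There is essentially no obstacle here --- the only thing to be careful about is the hypothesis $\nu\notin\Z_{-\epsilon}$, which is precisely what guarantees the cancelled factor $|\nu^2-m^2|^{\frac12}$ is nonzero (equivalently, that $\Pi(\epsilon,\nu)$ is irreducible, so that \eqref{e:recursion} genuinely determines the form rather than leaving it underdetermined, and that the form exists in the first place). One should also note that for $\nu\in\R$ the complex conjugation in \eqref{e:recursion} is harmless, $\overline\nu-m=\nu-m$, which is what makes the two sign factors combine into the single $\sgn(\nu^2-m^2)$. I would remark in passing that this is the crucial structural difference from the original family: for $\pi(\epsilon,\nu)$ the recursion \eqref{e:recursion} reads $a_{m+1}=-\tfrac{\nu-m}{\nu+m}\,a_{m-1}$, whose coefficient blows up or vanishes as $\nu\to\pm m$, forcing the limiting form to degenerate, whereas the modified operators spread the single pole of $\tfrac14(\nu^2-m^2)$ symmetrically across $E$ and $F$ so that only a sign survives.
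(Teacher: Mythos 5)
Your proposal is correct and follows essentially the same route as the paper: apply the invariance condition to $H$ to reduce to the diagonal entries, apply it to $E$ using the real formulas \eqref{e:real}, cancel the common nonzero factor (nonzero precisely because $\nu\notin\Z_{-\epsilon}$), and obtain the sign recursion $\langle v_{m+1},v_{m+1}\rangle_\nu=-\sgn(\nu^2-m^2)\langle v_{m-1},v_{m-1}\rangle_\nu$, which together with the normalization \eqref{e:normalize} gives $\pm1$ by induction in both directions. Your closing remark contrasting this with the recursion \eqref{e:recursion} for $\pi(\epsilon,\nu)$ matches the paper's discussion of why the unmodified family cannot carry a nondegenerate limit form.
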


Note that the dependence of $\langle\,,\,\rangle_\nu$ on $\nu$ is very
weak: the signs $\langle v_j,v_j\rangle_\nu$ alternate for
$|j|<|\nu|-1$, 
and then are constant for $|j|>|\nu|-1$.

If $\nu_0\in\Z_{-\epsilon}$, it is clear that both left and right
limits $\lim_{\nu\rightarrow \nu_0^\pm}\langle v,v\rangle _{\nu}$
exist for all $v$. Also   for all  $j$ 
$\lim_{\nu\rightarrow \nu_0^\pm}|\langle v_j,v_j)\rangle _{\nu}|=\pm1$, and 
$$
\lim_{\nu\rightarrow \nu_0^+}\langle v_j,v_j\rangle _{\nu}=\pm
\lim_{\nu\rightarrow \nu_0^-}\langle v_j,v_j\rangle _{\nu},
$$
This proves:

\begin{proposition}
\label{p:hermitian}
The family $\Pi(\epsilon,\nu)\quad(\nu\in\R)$ admits a  family of non-degenerate Hermitian
forms $\langle\,,\,\rangle_\nu$, 
satisfying 
$\langle v_j,v_j\rangle_\nu=\pm1$ for all $j$. 
The family of forms can be chosen to be upper or lower-semicontinuous 
at each $\nu\in\Z_{-\epsilon}$. 
\end{proposition}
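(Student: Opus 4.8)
The plan is to collect the facts established in the computations just above. For $\nu\notin\Z_{-\epsilon}$ one takes $\langle\,,\,\rangle_\nu$ to be the invariant form of the preceding Lemma, normalized by \eqref{e:normalize}: it is diagonal in the basis $\{v_j\}$ with every diagonal entry $\pm1$, hence non-degenerate. For $\nu_0\in\Z_{-\epsilon}$ the idea is to define $\langle\,,\,\rangle_{\nu_0}$ to be one of the one-sided limits $\lim_{\nu\to\nu_0^{\pm}}\langle\,,\,\rangle_\nu$ — both exist by the discussion preceding the statement — and then to verify that the resulting form is again non-degenerate and invariant for the \emph{modified} representation $\Pi(\epsilon,\nu_0)$.

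I would carry out the invariance check first. For each ordered pair $(v_i,v_j)$ of basis vectors the condition \eqref{e:EF} is a single scalar identity whose coefficients are the matrix entries of $\Pi(\epsilon,\nu)(E)$ and $\Pi(\epsilon,\nu)(F)$ — the functions $f_m$, continuous in $\nu$ by Proposition \ref{p:main} — together with the numbers $\langle v_k,v_k\rangle_\nu$, each of which has the relevant one-sided limit. Letting $\nu\to\nu_0^{\pm}$ in these identities therefore shows that \eqref{e:EF} holds at $\nu_0$ for the limiting form and the operators $\Pi(\epsilon,\nu_0)(E),\Pi(\epsilon,\nu_0)(F)$. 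Non-degeneracy is then immediate, since the limiting form is diagonal with each entry a limit of values $\pm1$, hence equal to $\pm1$.

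For the semicontinuity clause I would argue as follows. The family $\nu\mapsto\langle v_j,v_j\rangle_\nu$ is locally constant on $\R\setminus\Z_{-\epsilon}$ and, at $\nu_0$, jumps only at the weights $j$ lying in the two infinite-dimensional summands cut off at $j=\pm\nu_0\pm1$; within each such block all entries share a common sign. Taking the left limit at $\nu_0$ makes every $\langle v_j,v_j\rangle_\nu$ left-continuous there, taking the right limit makes it right-continuous, and — because the coefficients $f_{\pm\nu_0}(\nu_0)$ vanish, so \eqref{e:EF} imposes no relation across $j=\pm\nu_0\pm1$ — the two block signs may in fact be prescribed independently. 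One can thus choose the value at $\nu_0$ on each block to be the larger (or the smaller) of its two one-sided limits; this is the freedom recorded as ``upper or lower semicontinuous'', and the choices at distinct points of $\Z_{-\epsilon}$ are likewise independent, yielding a global family of the stated kind.

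The only point that is not a mechanical passage to the limit is the observation underlying the invariance step: one must check that the limiting form is invariant under the \emph{modified} operators at $\nu_0$, and in particular that \eqref{e:EF} at the break-point pairs $(v_{\pm\nu_0-1},v_{\pm\nu_0+1})$ reduces to $0=0$ because the relevant matrix coefficient is $0$. This is precisely where the complete reducibility of $\Pi(\epsilon,\nu_0)$ (Proposition \ref{p:main}(4)) enters, and once it is in hand the proposition follows at once.
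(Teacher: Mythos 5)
Your proposal is correct and follows essentially the same route as the paper: take the normalized invariant form with diagonal entries $\pm1$ for $\nu\notin\Z_{-\epsilon}$, observe that the one-sided limits exist at each $\nu_0\in\Z_{-\epsilon}$, and define the form there by such a limit, the semicontinuity freedom coming from the sign jumps on the discrete-series blocks. You merely make explicit two points the paper leaves implicit — that invariance of the limiting form under the operators $\Pi(\epsilon,\nu_0)(E),\Pi(\epsilon,\nu_0)(F)$ follows by passing to the limit in \eqref{e:EF} (with the break-point relations reducing to $0=0$ since $f_{\pm\nu_0}(\nu_0)=0$), and that the block signs can be chosen independently — which is a faithful elaboration rather than a different argument.
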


Return for the moment to the family of principal series
$\pi(\epsilon,\nu)$. For simplicity suppose $\epsilon=1$ and $n\ge 2$
is even. The family of forms $\langle \,,\,\rangle_\nu$ are
nondegenerate in a punctured neighborhood of $n$.  Furthermore
$\lim_{\nu\rightarrow n}\langle \,,\,\rangle_\nu= \langle
\,,\,\rangle_n$,
and the radical of this form is the sum of the two discrete series
representations. By taking residues, Jantzen defines an invariant
Hermitian form on the associated graded representation, i.e. on the
finite dimensional composition factor \cite{jantzen}, see  \cite[Definition
14.8]{unitaryDual}.
 This form plays an important
role in the unitarity algorithm of \cite{unitaryDual}.

We see that the family $\Pi(\epsilon,\nu)$ provides an alternative
description of the Jantzen form. In this setting we obtain a
family of nondegenerate Hermitian forms
$\langle \,,\,\rangle_\nu$ on the  irreducible representations
$\Pi(\epsilon,\nu)$ ($\nu\not\in\Z_{-\epsilon}$). Taking the limit 
as $\nu$ approaches a reducibility point $\nu_0$
gives a nondegenerate form on the completely reducible representation
$\Pi(\epsilon,\nu_0)$, which agrees (passing to the associated graded
representation) with the Jantzen form.

\bibliographystyle{plain}
\def\cprime{$'$} \def\cftil#1{\ifmmode\setbox7\hbox{$\accent"5E#1$}\else
  \setbox7\hbox{\accent"5E#1}\penalty 10000\relax\fi\raise 1\ht7
  \hbox{\lower1.15ex\hbox to 1\wd7{\hss\accent"7E\hss}}\penalty 10000
  \hskip-1\wd7\penalty 10000\box7}
  \def\cftil#1{\ifmmode\setbox7\hbox{$\accent"5E#1$}\else
  \setbox7\hbox{\accent"5E#1}\penalty 10000\relax\fi\raise 1\ht7
  \hbox{\lower1.15ex\hbox to 1\wd7{\hss\accent"7E\hss}}\penalty 10000
  \hskip-1\wd7\penalty 10000\box7}
  \def\cftil#1{\ifmmode\setbox7\hbox{$\accent"5E#1$}\else
  \setbox7\hbox{\accent"5E#1}\penalty 10000\relax\fi\raise 1\ht7
  \hbox{\lower1.15ex\hbox to 1\wd7{\hss\accent"7E\hss}}\penalty 10000
  \hskip-1\wd7\penalty 10000\box7}
  \def\cftil#1{\ifmmode\setbox7\hbox{$\accent"5E#1$}\else
  \setbox7\hbox{\accent"5E#1}\penalty 10000\relax\fi\raise 1\ht7
  \hbox{\lower1.15ex\hbox to 1\wd7{\hss\accent"7E\hss}}\penalty 10000
  \hskip-1\wd7\penalty 10000\box7} \def\cprime{$'$} \def\cprime{$'$}
  \def\cprime{$'$} \def\cprime{$'$} \def\cprime{$'$} \def\cprime{$'$}
  \def\cprime{$'$} \def\cprime{$'$}

\enddocument
\end